\newcommand{\bb}[1]{\left({#1}\right)}					
\newcommand{\sq}[1]{\left[#1\right]}						
\newcommand{\cc}[1]{\left\{#1\right\}}					
\newcommand{\op}[1]{\mathcal{#1}}
\newcommand{\ord}[1]{{\sf O}\bb{#1}}					
\newcommand{\sfrac}[2]{\mbox{$\frac{#1}{#2}$}}	
\newcommand{\hf}{\mbox{$\frac12$}}
\newcommand{\surf}{{\cal D}}
\newcommand{\eps}{\varepsilon}
\newtheorem{theorem}{Theorem}
\newtheorem{lemma}[theorem]{Lemma}
\newtheorem{assumption}[]{Assumption}
\newcommand{\sign}{\operatorname{sign}}
\newcommand{\crefs}[2]{\cref{#1} and \cref{#2}}
\crefname{equation}{}{}
\Crefname{equation}{}{}
\def\eps{\varepsilon}
\journal{a journal}
\begin{document}

\begin{frontmatter}

\title{Novel slow-fast behaviour in an oscillator\\ driven by a frequency-switching force}

\author{Carles~Bonet\fnref{label1}}
\ead{carles.bonet@upc.edu}
\author{Mike R. Jeffrey\corref{cor1}\fnref{label4}}
\ead{mike.jeffrey@bristol.ac.uk}
\author{Pau~Mart\'in\fnref{label1,label2}}
\ead{p.martin@upc.edu}
\author{Josep~M.~Olm\fnref{label1,label3}}
\ead{josep.olm@upc.edu}

\cortext[cor1]{Corresponding author: Mike R. Jeffrey}
\address[label1]{Departament de Matem\`atiques, Universitat Polit\`{e}cnica de Catalunya, Spain}
\address[label2]{Centre de Recerca Matem\`atica, Barcelona, Spain}
\address[label3]{Institute of Industrial and Control Engineering, Universitat Polit\`{e}cnica de Catalunya, Spain}
\address[label4]{Department of Engineering Mathematics, University of Bristol, UK}
%
%

\date{\today}


\begin{abstract}
When an oscillator switches abruptly between different frequencies, there is some ambiguity in deciding how the system should be modelled at the switch. Here we describe two seemingly natural models of a switch in a simple periodically-forced harmonic oscillator, which disagree starkly in their predictions of its long time behaviour. Attempting to resolve the disagreement by `regularizing' the switch not only preserves the disagreement, but shows it increases with time. One of the models corresponds to a conventional `Filippov' description of a nonsmooth system, while the second exhibits a structure that irreversibly ages, developing a number of novel multi-scale behaviours that we believe have not been reported before. These include slow-fast staircases, novel mixed-mode oscillations, and a synchronized canard explosion. These features are proven to exist using asymptotic analysis, but as they involve a slow-fast time-scale separation that increases with time, they lie beyond the reach of numerical methods.
\end{abstract}

\begin{keyword}
nonsmooth, Filippov, slow-fast, timescale, ageing, switching, canard
\end{keyword}

\end{frontmatter}


\section{Introduction}

Take a damped oscillator, say $\ddot y=-a\dot y-y$, and apply a sinusoidal forcing that switches between two frequencies,
\begin{align}\label{sys0}
\ddot y=-a\dot y-y-\sin(\pi\omega_\pm t)\;,
\end{align}
such that the frequency $\omega_+$ applies during forward motion, $\dot y>0$, and $\omega_-$ applies during backward motion, $\dot y<0$. How should the switching of the system be modelled around the threshold $\dot y=0$?

This was the problem posed in \cite{j15hidden}, and its study uncovers deep unresolved issues for the general study of nonsmooth systems, as well as revealing new dynamical phenomena. A simplified first order analogue of \cref{sys0} was studied in \cite{j20malt}, corresponding to an electrical circuit switching between two different current sources. Here we extend this to study the much richer second order system \cref{sys0}, revealing new phenomena of dynamic ageing, fast-slow staircases, and synchronized canard explosions. With some preparatory work having been carried out in \cite{j20malt} to prove the more non-standard technical details, here we can explore these phenomena more qualitatively, avoiding lengthy technical formalities where possible.

The crux of the problem lies first in how we define \cref{sys0} at the switch between the two frequencies $\omega_\pm$, and then how we attempt to solve it. For simplicity let us fix the two frequencies as
\begin{align}\label{omega}
\omega_+=3/2\qquad{\rm and}\qquad \omega_-=1/2
\end{align}
(for the purposes of our main results any two values can be chosen, these were merely adopted in \cite{j20malt,j15hidden} as they provided clear illustrations).
Let us then write the sinusoidal forcing as a function $f(t,\lambda)$ in terms of some quantity $\lambda$ that represents a control mechanism, such that $f(t,\pm1)=\sin\bb{(1\pm\hf)\pi t}$, and for simplicity we take
\begin{align}\label{sign}
\lambda\in\begin{cases}
+1&{\rm if}\;\;z>+\eps\;,\\z/\eps&{\rm if}\;\;|z|\le\eps\;,\\-1&{\rm if}\;\;z<-\eps\;,
\end{cases}
\end{align}
for small $\eps\ge0$. A perfectly fast switch is obtained in the limit $\eps\rightarrow0$. If we replace this ramp function in \cref{sign} with any smooth strictly monotonic sigmoid, then our results are not changed substantially. If less idealised switching processes are taken into account, perhaps involving delays, hysteresis, or stochasticity, some indications of what may happen can be found in \cite{j18model}, but are beyond our scope here.

Written as a first order system, \cref{sys0} then becomes
\begin{align}\label{sys}
\dot x&=1\;,\nonumber\\
\dot y&=z\;,\\
\dot z&=-az-y-f(x,\lambda)\;.\nonumber
\end{align}
This still leaves us with the problem of finding a suitable expression for $f$ in terms of $\lambda$, i.e. the control switch. There is no unique solution to this, and our purpose here is to show how different expressions result in entirely incommensurable, and sometimes novel, dynamics.


We can use an elementary physical device consistent with \cref{sys0} to motivate the modeling of $f$. Take a current source in series with a resistor, inductor, and capacitor, as shown in the lefthand pictures in \cref{fig:rod}. Let the source $f$ be an alternating voltage, which switches between frequencies $\omega_\pm$ according to the direction of the current. Upon non-dimensionalization we directly obtain the model \cref{sys}, with charge $y$ and current $z=\dot y$. (This is also analogous to a mechanical scenario of a mass on a spring experiencing a sinusoidal driving force, as found in devices like solenoid valves, for example).
\begin{figure}[h!]\centering
\includegraphics[width=0.95\textwidth]{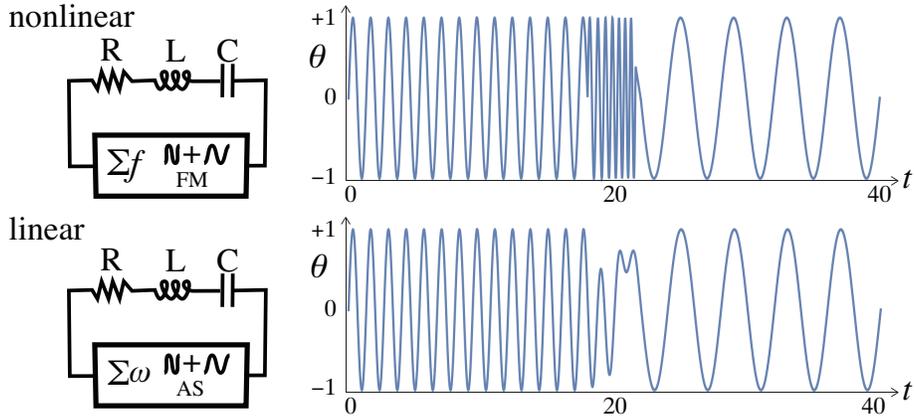}
\vspace{-0.3cm}\caption{\small\sf Left: sketch of the R-L-C circuit described in the text, with a nonlinear or linear modulation of a source $f=\sin(\pi\omega t)$. Top: frequency modulation between $\omega=1\pm\hf$. Bottom: additive synthesis of the waveforms $f=\sin\bb{\pi t(1\pm\hf)}$. Right: illustration of how these result in different signal profiles $f$ as the frequency switches according to \cref{non} (top) or \cref{lin} (bottom), using \cref{sign} for a toy solution $z(t)=t-20$ with $\eps=2$; note the contrast in frequency and amplitude modulation around $t=20\pm2$.}\label{fig:rod}\end{figure}
%

The form of $f$ comes from how we carry out the switch between voltage sources.
Given that $f(x,\pm1)=\sin\bb{(1\pm\hf)\pi x}$, perhaps the most obvious expression for $f(x,\lambda)$ is to write
\begin{subequations}\label{nonlin}
\begin{align}\label{non}
f(x,\lambda)=\sin\bb{(1+\hf\lambda)\pi x}\;,
\end{align}
which would be consistent with a {\it frequency modulator} that ramps abruptly between values $\omega=1+\hf\lambda$, as illustrated in the upper part of \cref{fig:rod}.

An alternative is to assume that the voltage source interpolates between the waveforms $f=\sin(\sfrac32\pi x)$ and $f=\sin(\hf\pi x)$ themselves, that is,
\begin{align}\label{lin}
f(x,\lambda)=\hf(1+\lambda)\sin(\sfrac32\pi x)+\hf(1-\lambda)\sin(\hf\pi x)\;,
\end{align}
\end{subequations}
more akin to {\it additive synthesis} of wave sources, illustrated in the lower part of \cref{fig:rod}.

Both of these are common methods for combining or switching between different waveforms in signal creation or musical applications.
On the right of \cref{fig:rod} we give an example of the different force profiles that the system may experience, as it transitions between the two frequencies when governed by these two different expressions. These graphs give a hint of why the expressions \cref{non} and \cref{lin} might influence the system differently, but not of how sever that difference will be. 

Our aim here will be to show how starkly the behaviours of these two models differ, despite them being indistinguishable in the limit $\eps\rightarrow0$, and both being consistent (among other possible models for $f$) with \cref{sys0}. The behaviour that results from \cref{lin} is consistent with Filippov's theory of nonsmooth systems \cite{f88}, so it has particular significance as a widely used convention, but being standard, we will summarize it only briefly. Instead we largely focus on the behaviours that result from \cref{non}, which seem to exhibit phenomena that have possibly not been seen before in the dynamical systems theory of either nonsmooth or slow-fast systems.

Both models \cref{nonlin} have two key features in the limit $\eps\rightarrow0$. Firstly, as we show in \cref{sec:periodics}, they share similar periodic orbits that pass through the regions $z\gtrless0$ and {\it cross} through $z=0$ transversally. Secondly, both models involve a region on $\cc{z=0,\;|y|\le1}$ where {\it sliding} occurs, that is, motion that slides along the switching threshold $z=0$. It is in the sliding motion that the two different models \cref{non} and \cref{lin} differ markedly. 

For $\eps$ small, both models \cref{nonlin} lead to an invariant manifold inside the switching layer $|z|<\eps$ that is responsible for the dynamics becoming constrained to {sliding} along $z=0$ in the limit $\eps\rightarrow0$. However, the form of those manifolds, and dynamics associated with them, differ entirely.

With the representation \cref{lin}, the hyperbolicity of the invariant manifolds scales with $1/\eps$, while with \cref{non} the hyperbolicity scales with $x/\eps$, a crucial difference. With \cref{lin}, the invariant manifold simply oscillates in time $x$, inducing simple sliding behaviour and simple local and periodic attractors consistent with the most commonly adopted convention of the {\it Filippov system} for \cref{sys0}; we introduce these briefly in \cref{sec:linear}. With the representation \cref{non} the system ages, i.e. changes irreversibly as $x$ increases, 
because the invariant manifold becomes increasingly packed together with a density $1/x$, growing new branches, leading to intricate (and to our knowledge new) multiple-scale phenomena involving mixed-mode oscillations, synchronized canard explosions, and a mechanism of fast-slow stepping across the switch. These various phenomena are introduced in the following sections.

We first analyse the linear system given by \cref{sys} with \cref{lin} in the switching layer $|z|\le\eps$ in \cref{sec:linear}, but only briefly, as this can be studied using standard concepts. From then on we focus on the nonlinear system in the switching layer $|z|\le\eps$, given by \cref{sys} with \cref{non}, which seems to exhibit behaviours unseen in either nonsmooth systems or in systems with slow-fast timescales. The elements of these behaviours are proven in lemmas in \cref{sec:slowfast}, and then, foregoing lengthy but trivial matching that we capture in an assumption at the start of \cref{sec:bigsmall}, we give our main theorems revealing novel long term behaviour. That behaviour is utterly incommensurable with the linear formulation of the system.

We make only some basic observations that apply to the behaviour outside the switching layer, in $|z|>\eps$, in the limit $\eps=0$ in \cref{sec:periodics}. We conclude with some final remarks in \cref{sec:conc}, including indications for the many directions of interest for future study.

Our objective is to highlight certain novel dynamical properties of \cref{non} that distinguish it from \cref{lin}, as an example of the potential differences of these and other possible formulations of a system that involves switching. To this end, we will not rigorously prove all results or seek to characterize every behaviour, only derive the distinguishing features and prove the less standard results. We will solely consider positive times $x>0$.


\section{The linear-switching system for $\eps\ge0$}\label{sec:linear}

To analyse the system \cref{sys} with the linear forcing \cref{lin} in the switching layer $|z|\le\eps$, let $z=\eps u$ and study the dynamics of $u$ on $[-1,+1]$. Some basic features, including two types of orbit and the limit $\eps\rightarrow0$, are sketched in \cref{fig:typesl}.
\begin{figure}[h!]\centering
\includegraphics[width=0.8\textwidth]{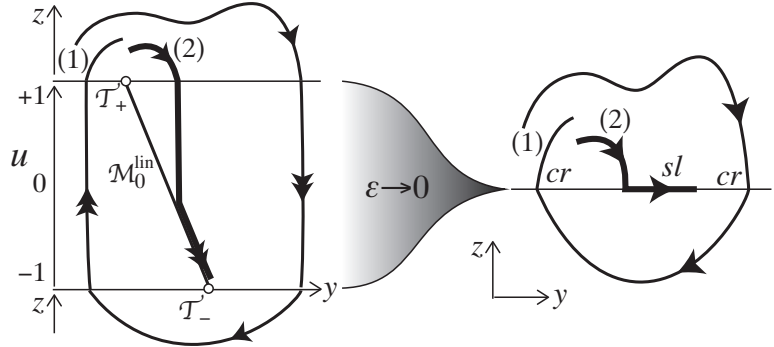}
\vspace{-0.3cm}\caption{\small\sf The switching layer $|u|\le1$ and its collapse to $z=0$ as $\eps\rightarrow0$. We see the critical slow manifold $\op M_0^{\rm lin}$, which touches the edge of the layer at $\op T_\pm$. Two example solutions are shown that (1) cross or (2) slide on $z=0$ for $\eps=0$. }\label{fig:typesl}\end{figure}

For $(x,y,u)$ on $u\in[-1,+1]$ this gives the equations
\begin{align}\label{linslow}
\dot x&=1\;,\nonumber\\
\dot y&=\eps u\;,\\
\eps\dot u&=-\eps au-y-\hf(1+u)\sin(\sfrac32\pi x)-\hf(1-u)\sin(\hf\pi x)\;.\nonumber
\end{align}
This is a slow-fast system that can be studied using geometric singular perturbation theory (e.g. \cite{f79,j95,st96}), and we shall only briefly outline the main features. First setting $\eps=0$ in \cref{linslow} we find there is a critical manifold $\op M_0^{\rm lin}$ given by
\begin{align}\label{linM0}
\op M_0^{\rm lin}&=\left\{\;(x,y,u)\in\mathbb R\times\mathbb R\times(-1,+1)\;:\;\right.\nonumber\\
			&\qquad\quad\left.y=-\hf(1+u)\sin(\sfrac32\pi x)-\hf(1-u)\sin(\hf\pi x)\;\right\}\;,
\end{align}
as illustrated in \cref{fig:Mlin}. The manifold $\op M_0$ loses normal hyperbolicity with respect to the $u$ dynamics where the normal to $\op M_0$ has no $u$-component, that is where $\partial\dot u/\partial u=0$, which occurs on sets
\begin{align}\label{linfold}
\op C_{mn}&=\cc{\;(x,y,u)\in\op M_0^{\rm lin}\;:\;x=2m-\hf n\;}\;,
\end{align}
for $m\in\mathbb Z$, with $n$ taking values $\cc{-,0,+}$. Between each adjacent branch of $\op C_{mn}$ with $x=0,\hf,\sfrac32,2,\sfrac52,...$, the manifold $\op M_0$ changes between being attracting or repelling with respect to the fast $u$ dynamics, on subsets of $\op M_0$ defined in fact by \cref{linstab}. 

\begin{figure}[h!]\centering
\includegraphics[width=0.65\textwidth]{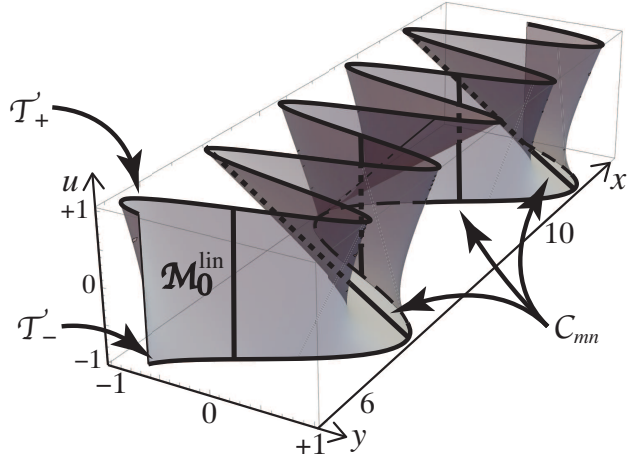}
\vspace{-0.3cm}\caption{\small\sf
The critical slow manifold $\op M_0$ in the switching layer for the linear system, which touches the edge of the layer at $\op T_\pm$, and has turning points at $\op C_{mn}$. }\label{fig:Mlin}\end{figure}

For $\eps>0$, in an $\eps$-neighbourhood of $\op M_0^{\rm lin}$ there exist invariant manifolds of slow dynamics, which we can express as
\begin{align}\label{Melin}
\op M_\eps^{\rm lin}=\cc{(x,y,u)\;:\;y=Y(x,u;\eps)}\;,
\end{align}
where
\begin{align}
y=Y(x,u;\eps):=Y_0(x,u)+\eps Y_1(x,u)+\eps^2 Y_2(x,u)+...
\end{align}
To find the functions $Y_r$ we solve
\begin{align}
0&=\sfrac{d\;}{dt}(Y-y)=(\dot x,\dot y,\dot u)\cdot\nabla (Y-y)\nonumber\\
&=	Y_{0,x}-Y_{0,u}( au+ Y_1)\nonumber\\&\quad
	+\eps\cc{ Y_{1,x}- u- Y_{0,u}Y_2- Y_{1,u}(au+ Y_1)}+\ord{\eps^2}\nonumber
\end{align}
which implies
\begin{align}
Y_0&=-\hf(1+u)\sin(\sfrac32\pi x)-\hf(1-u)\sin(\hf\pi x)\;,\nonumber\\
Y_1&=\sfrac{Y_{0,x}}{Y_{0,u}}-au=\sfrac\pi2\sfrac{3(1+u)\cos(\sfrac32\pi x)+(1-u)\cos(\hf\pi x)}{\sin(\sfrac32\pi x)-\sin(\hf\pi x)}-au\;,\\
Y_2&=-u/Y_{0,u}=2u/\cc{\sin(\sfrac32\pi x)-\sin(\hf\pi x)}\;.\nonumber
\end{align}
This gives the slow dynamics on $\op M_\eps^{\rm lin}$ as
\begin{align}\label{linslowe}
\dot x&=1\;,\nonumber\\
\dot y&=\eps u\;,\\
\dot u&=- \sfrac\pi2\sfrac{3(1+u)\cos(\sfrac32\pi x)+(1-u)\cos(\hf\pi x)}{\sin(\sfrac32\pi x)-\sin(\hf\pi x)}+\ord{\eps}\;,\nonumber
\end{align}
with solutions
\begin{align}\label{linsloworb}
x&=x_0+t\;,\nonumber\\
y&=y_0+\ord{\eps}\;,\\
u&=-\sfrac{2y_0+\sin(\sfrac32\pi x)+\sin(\hf\pi x)}{\sin(\sfrac32\pi x)-\sin(\hf\pi x)}+\ord{\eps}\;,\nonumber
\end{align}
where we must have $y_0=-\hf(1+u_0)\sin(\sfrac32\pi x_0)-\hf(1-u_0)\sin(\hf\pi x_0)$ by \cref{linM0}.

\begin{figure}[h!]\centering
\includegraphics[width=0.98\textwidth]{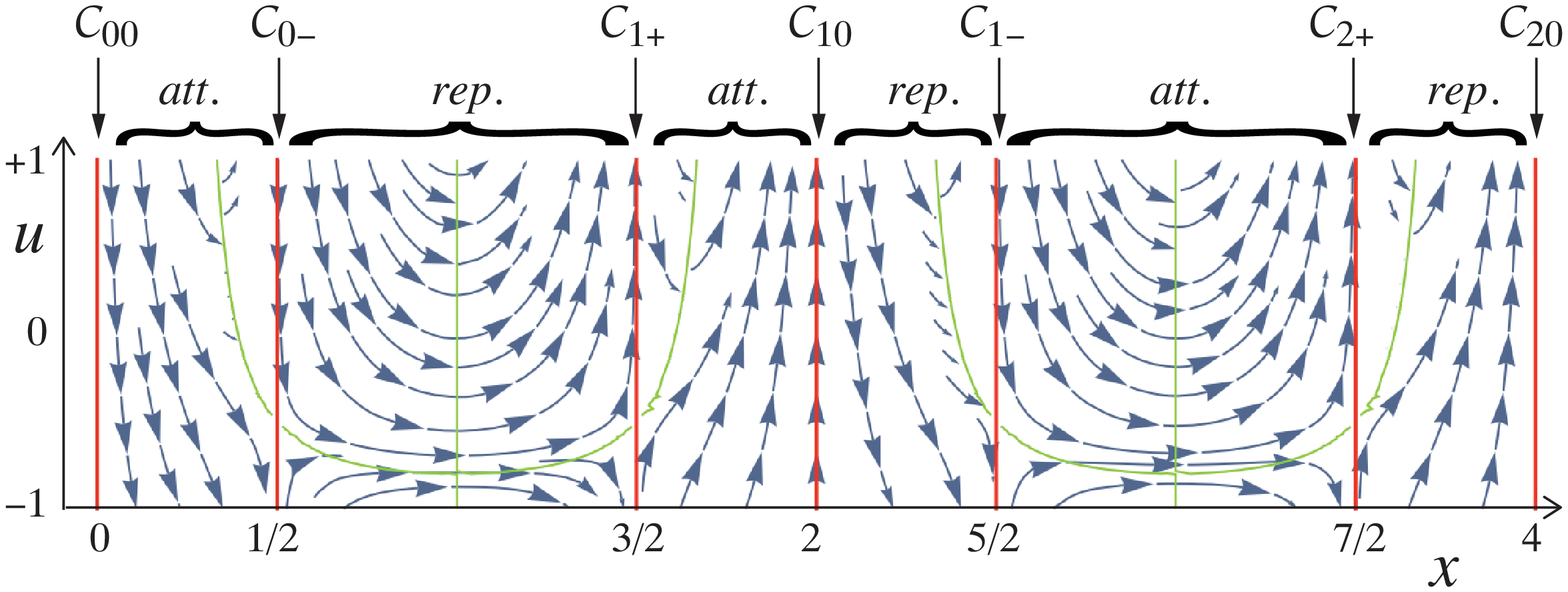}
\vspace{-0.3cm}\caption{\small\sf Dynamics on $\op M_\eps^{\rm lin}$, indicating its attracting ({\it att.}) and repelling ({\it rep.}) regions. }\label{fig:linslow}\end{figure}

Clearly these solutions are singular where the denominator vanishes, which corresponds to them reaching $\op C_{mn}$.  However, it is possible for solutions to pass through these sets between attracting and repelling branches of $\op M_\eps^{\rm lin}$. There is a distinguished non-singular trajectory that we can write to leading order as
\begin{align}\label{lincanard}
x&=2m+1+t\;,\nonumber\\
y&=-1/{\sqrt2}\;,\\
u&=\sfrac{ -\sqrt2-\cos(\sfrac32\pi t)+\cos(\hf\pi t)}{\cos(\sfrac32\pi t)+\cos(\hf\pi t)}\;,\nonumber
\end{align}
for $t\in(-\sfrac56,+\sfrac56)$, and for any $m\in\mathbb Z$. This is written such that it lies at $(x_0,y_0,u_0)=(2m+1,-\sfrac1{\sqrt2},-\sfrac1{\sqrt2})$ when $t=0$. It then passes between attracting and repelling branches of $\op M_\eps^{\rm lin}$ at $(x,y,u)=(2m+1\mp\hf,-\sfrac1{\sqrt2},-\hf)\in\op C_{m\pm}$. Everywhere else the sets $\op C_{mn}$ are impassable on $\op M_0^{\rm lin}$.



\medskip

We have only sketched some basics of the linear system, to perhaps be explored in future work, but sufficient to contrast to the nonlinear system in the remainder of the paper.

\bigskip
\section{The nonlinear switching system for $\eps\ge0$}\label{sec:slowfast}

Now take the system \cref{sys} with the nonlinear forcing \cref{non}, which we analyse in the switching layer $|z|\le\eps$ in much the same way as the linear system at the start of \cref{sec:linear}. Let $z=\eps u$ and study the dynamics of $u$ on $[-1,+1]$. Some basic features, including two types of orbit and the limit $\eps\rightarrow0$ are sketched in \cref{fig:types}.
\begin{figure}[h!]\centering
\includegraphics[width=0.8\textwidth]{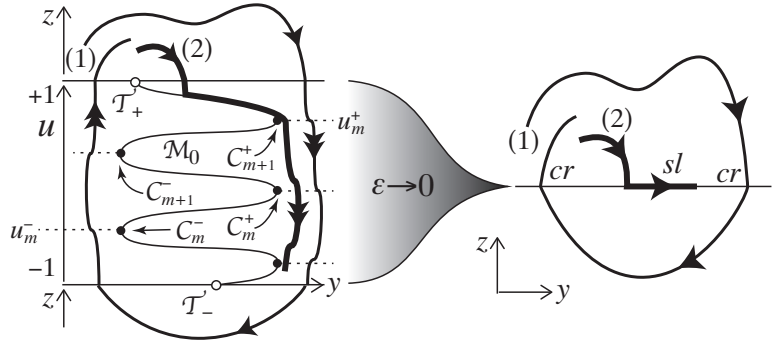}
\vspace{-0.3cm}\caption{\small\sf
The switching layer $|u|\le1$ and its collapse to $z=0$ as $\eps\rightarrow0$ (compare with \cref{fig:typesl} for the linear system). We see the critical slow manifold $\op M_0$, which touches the edge of the layer at $\op T_\pm$, and has turning points at $\op C_m^\pm$. Two example solutions are shown that (1) cross or (2) slide on $z=0$ for $\eps=0$. }\label{fig:types}\end{figure}

For $(x,y,u)$ on $u\in[-1,+1]$ this gives the equations
\begin{align}\label{slow}
\dot x&=1\;,\nonumber\\
\dot y&=\eps u\;,\\
\eps\dot u&=-\eps au-y-\sin\sq{\pi x(1+\hf u)}\;.\nonumber
\end{align}
Like \cref{linslow}, we obtain a slow-fast system on $u\in[-1,+1]$ with a critical manifold $\op M_0$, now given by
\begin{align}\label{M0}
\op M_0&=\cc{\;(x,y,u)\in\mathbb R^+\times\mathbb R\times(-1,+1)\;:\;y=-\sin\sq{\pi x(1+\hf u)}\;}\;,
\end{align}
but the righthand side of \cref{slow} now depends on $x$ in a manner that makes this unusual as a slow-fast system.

The manifold $\op M_0$ has curves of turning points (where the normal vector to $\op M_0$ has no $u$-component), on sets
\begin{subequations}\label{fold}
\begin{align}
\op C^+_m&=\cc{(x,y,u)\in\op M_0\;:\;y=+1,\;u=u_{m}^+:=\sfrac{4m-1}x-2\;}\;,\label{fold+}\\
\op C^-_m&=\cc{(x,y,u)\in\op M_0\;:\;y=-1,\;u=u_{m}^-:=\sfrac{4m+1}x-2\;}\;,\label{fold-}
\end{align}
\end{subequations}
for $m\in\mathbb Z$.
Notice that as $x$ increases the turning points of $\op M_0$ pack together with a density $2/x$, as given by \cref{fold} (with `density' meaning the distance between adjacent curves of turning points on either $y=+1$ or $y=-1$).

The critical manifold $\op M_0$ and turning points $\op C_m^\pm$ are illustrated in \cref{fig:Mnon}. Each branch of turning points indicated by $\op C_m^\pm$ in \cref{fig:types}, \cref{fig:Mnon}, and later figures, is for a different value of the index $m$.
\begin{figure}[h!]\centering
\includegraphics[width=0.75\textwidth]{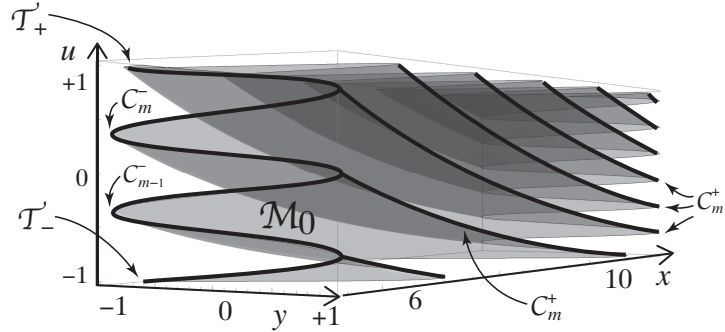}
\vspace{-0.3cm}\caption{\small\sf The critical slow manifold $\op M_0$ in the switching layer for the nonlinear system (compare with \cref{fig:Mlin} for the linear system), which touches the edge of the layer at $\op T_\pm$, and has turning points at $\op C_m^\pm$.
}\label{fig:Mnon}\end{figure}

For the linear system in \cref{sec:linear} we proceeded at this point by analysing its slow-fast dynamical separation on timescales $t$ and $t/\eps$. For the nonlinear system, by contrast, the derivative $\partial\dot u/\partial u$ from \cref{slow} grows unboundedly with $x$, and as a consequence we shall see below that the fast dynamics actually unfolds on a timescale $tx/\eps$.

For this reason we will primarily be concerned with what happens to the dynamics in the layer as $x/\eps\rightarrow\infty$. In the remainder of this section we calculate approximations for the main features of the slow and fast dynamics, which we will bring together in \cref{sec:bigsmall} to describe the different large and small forms of relaxation oscillations in the layer. To avoid the necessary calculations obscuring the simplicity of the results, we first summarize the main features informally in \cref{sec:brief}.
The dynamics in $u\in[-1,+1]$ will consist of slow components inside an $\eps$-neighbourhood of $\op M_0$, and fast components outside it, which we derive in \cref{sec:slowapprox} and \cref{sec:stairs}. 

\bigskip
\subsection{Brief outline of the system's dynamics}\label{sec:brief}

Let us briefly summarize the dynamics that will be more fully described in the following sections.

Inside the layer, the slow-fast system \cref{slow} exhibits complex multi-scale dynamics, composed of transitions between the following:
\begin{enumerate}
\item large cycles to be described in \cref{sec:bigcycles}, which could be considered a form of relaxation oscillations, comprised of large arcs in the slow dynamics (\cref{thm:bigarc}), and fast transitions that take the form of `staircases' (\cref{thm:stairs}), illustrated in \cref{fig:bigcyc}.
\item small cycles to be described in \cref{sec:smallcycles}, which consists of small arcs in the slow dynamics (to be derived in \cref{thm:smallarc}) and fast `staircase' transitions (\cref{thm:stairs}), illustrated in \cref{fig:smallcyc}.
\end{enumerate}

\begin{figure}[h!]\centering
\includegraphics[width=0.98\textwidth]{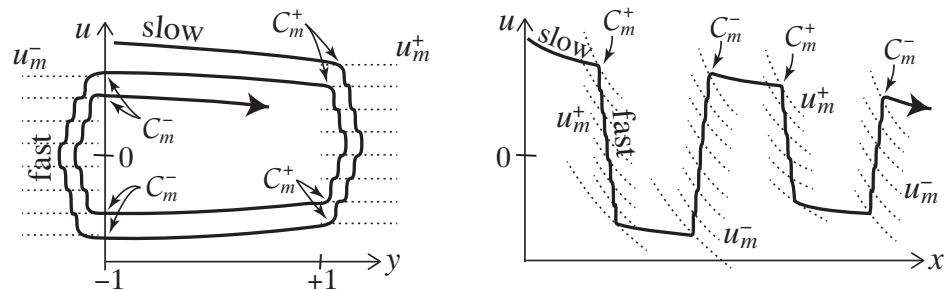}
\vspace{-0.3cm}\caption{\small\sf Large cycles inside the region $|u|<1$, shown in the $(u,y)$ and $(u,x)$ projections, consisting of slow arcs and fast staircases. As in \cref{fig:smallcyc} the dashed curves $u_m^\pm$ are the loci of $u$-coordinates of the turning points $\op C_m^\pm$. Each branch of turning points indicated by $\op C_m^\pm$, and corresponding values of $u$ indicated by $u_m^\pm$, is for a different value of $m$.}\label{fig:bigcyc}\end{figure}

\begin{figure}[h!]\centering
\includegraphics[width=0.9\textwidth]{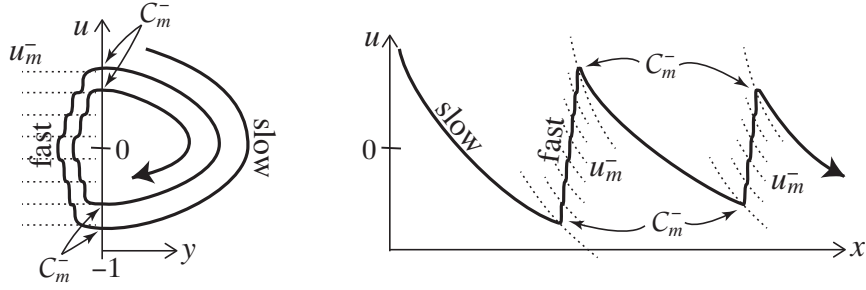}
\vspace{-0.3cm}\caption{\small\sf Small cycles inside the region $|u|<1$, again shown in the $(u,y)$ and $(u,x)$ projections, consisting of slow small arcs and fast staircases. The dashed curves $u_m^-$ are the loci of $u$-coordinates of the turning points $\op C_m^-$. }\label{fig:smallcyc}\end{figure}

Essentially our main results, which follow, involve deriving the elements that make up these large and small cycles.

\newpage
\subsection{Slow dynamics for large $x$: large and small arcs}\label{sec:slowapprox}

For large $x$ in \cref{slow}, it becomes impossible to separate out the fast oscillation of the $\sin(\pi x(1+\hf u))$ term from the slow-fast timescale separation created by small $\eps$. Hence the slow dynamics depends not just on the small parameter $\eps$, but also crucially on the largeness of the time $x$. Introducing a new variable
\begin{align}
v=xu\;,
\end{align}
the system \cref{slow} becomes
\begin{align}\label{slowv}
\dot x&=1\;,\nonumber\\
\dot y&=\sfrac{\eps}{x}v\;,\\
\sfrac{\eps}{x}\dot v&=\sfrac{\eps}{x}(\sfrac1x-a)v-y-\sin\sq{\pi (x+\hf v)}\;,\nonumber
\end{align}
making it explicit that the small quantity separating fast and slow dynamics in the system is $\eps/x$, rather than $\eps$ alone. The fast component of the sinusoid is now separated out from the variable $v$ in the argument $\pi (x+\hf v)$. 

For this system we will show the following.

\begin{lemma}[Slow manifolds]\label{thm:Me}
The slow dynamics of \cref{slow} lies on invariant manifolds in an $\sfrac\eps x$-neighbourhood of $\op M_0$ given by
\begin{align}\label{Me}
\op M_\eps=\cc{(x,y,v)\;:\;y=Y(x,v;\eps)}\;,
\end{align}
where
\begin{align}\label{Ye}
Y(x,v;\eps)&:=-\sin\sq{\pi (x+\hf v)}+\sfrac\eps{x}\bb{2+(\sfrac1x-a)v}+\ord{(\sfrac{\eps}{x})^2}\;.
\end{align}
\end{lemma}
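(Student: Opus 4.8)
The plan is to construct $\op M_\eps$ as an invariant manifold by the standard asymptotic (power series in $\eps/x$) ansatz used for the linear system in \cref{sec:linear}, adapted to the fact that the small parameter is now $\eps/x$ rather than $\eps$. First I would posit $y=Y(x,v;\eps)=Y_0(x,v)+\tfrac{\eps}{x}Y_1(x,v)+(\tfrac{\eps}{x})^2 Y_2(x,v)+\cdots$ and impose invariance, i.e. that $\tfrac{d}{dt}(Y-y)=0$ along \cref{slowv}. Writing this out, $\tfrac{d}{dt}(Y-y)=Y_x+Y_v\dot v-\dot y$, and substituting $\dot y=\tfrac{\eps}{x}v$ and $\tfrac{\eps}{x}\dot v=\tfrac{\eps}{x}(\tfrac1x-a)v-y-\sin[\pi(x+\tfrac12 v)]$, so that on $\op M_\eps$ one has $-y-\sin[\pi(x+\tfrac12 v)]=Y_0-Y=-\tfrac{\eps}{x}Y_1-\cdots$ (using $Y_0=-\sin[\pi(x+\tfrac12 v)]$), which lets $\dot v$ be expressed as a series in $\eps/x$ starting at $\OO(1)$.

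\medskip

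The key steps, in order, are: (i) collect the $\OO(1)$ terms, which force $Y_0=-\sin[\pi(x+\tfrac12 v)]$, recovering $\op M_0$; (ii) collect the $\OO(\eps/x)$ terms — here one must be careful that $\tfrac{d}{dt}[\tfrac{\eps}{x}Y_1]=\tfrac{\eps}{x}Y_{1,x}-\tfrac{\eps}{x^2}Y_1+\tfrac{\eps}{x}Y_{1,v}\dot v$, so the extra $-\tfrac{\eps}{x^2}Y_1$ term appears relative to the constant-parameter case; matching gives an algebraic equation for $Y_1$ of the form $Y_{0,x}+Y_{0,v}\cdot(\text{leading }\dot v)=\cdots$, which, since the leading $\dot v$ is itself proportional to $Y_1$ through the $-y-\sin$ term, yields $Y_1=\tfrac{Y_{0,x}}{Y_{0,v}}\cdot(\text{something})+(2+(\tfrac1x-a)v)$ — I would verify that the stated $Y_1=2+(\tfrac1x-a)v$ drops out, noting that $Y_{0,x}$ and $Y_{0,v}$ are proportional (both come from differentiating $\sin[\pi(x+\tfrac12 v)]$, with $Y_{0,x}=-\pi\cos[\pi(x+\tfrac12 v)]$ and $Y_{0,v}=-\tfrac{\pi}{2}\cos[\pi(x+\tfrac12 v)]$, so $Y_{0,x}/Y_{0,v}=2$), which is precisely what produces the clean constant term $2$; (iii) note that the $\OO((\eps/x)^2)$ and higher terms are solvable in the same algebraic fashion away from the turning points $\op C_m^\pm$ where $Y_{0,v}=0$, so the series is well-defined as a formal/asymptotic expansion there; (iv) invoke Fenichel-type geometric singular perturbation theory (as cited, \cite{f79,j95,st96}) on compact $x$-intervals bounded away from $\op C_m^\pm$ to upgrade the formal series to a genuine invariant manifold $\op M_\eps$ that is $\OO(\eps/x)$-close to $\op M_0$, with the asymptotics \cref{Ye} giving its leading-order graph.

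\medskip

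The main obstacle is that this is \emph{not} a textbook slow-fast system: the effective small parameter $\eps/x$ is time-dependent and the "fast" timescale is $tx/\eps$, so a direct citation of Fenichel's theorem is not quite legitimate. I would handle this either by (a) restricting to a large but bounded window $x\in[x_*,2x_*]$, on which $\eps/x$ is comparable to a genuine constant $\eps/x_*$ and standard GSPT applies with uniform constants, then noting the manifold pieces match across windows; or (b) appealing to the preparatory technical work in \cite{j20malt}, which the authors explicitly flag as having established "the more non-standard technical details" of exactly this kind of $x$-dependent hyperbolicity. Either way the computation of $Y_0,Y_1$ is routine; the care is entirely in justifying existence/persistence of $\op M_\eps$ uniformly for large $x$ and in tracking that the neighbourhood size and the expansion parameter are $\eps/x$, not $\eps$. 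A secondary minor point to state clearly is the domain restriction to $|u|<1$ (equivalently $|v|<x$) and to $x$ away from the turning-point curves, where normal hyperbolicity with rate $\sim x/\eps$ holds.
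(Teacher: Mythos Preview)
Your proposal is correct and follows essentially the same route as the paper: posit the graph expansion $Y=Y_0+\tfrac{\eps}{x}Y_1+\cdots$, impose invariance $\tfrac{d}{dt}(Y-y)=0$ along \cref{slowv}, and solve order by order to obtain $Y_0=-\sin[\pi(x+\tfrac12 v)]$ and $Y_1=Y_{0,x}/Y_{0,v}+(\tfrac1x-a)v=2+(\tfrac1x-a)v$. Your additional remarks on handling the time-dependent small parameter via compact $x$-windows or by appeal to \cite{j20malt}, and on the loss of hyperbolicity at $\op C_m^\pm$, are more explicit about rigor than the paper's own proof, which simply carries out the formal computation.
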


An alternative to \cref{Me} to represent the slow manifolds $\op M_\eps$ is to express them as
\begin{align}
\op M_\eps=\cc{(x,y,v)\;:\;v=V_m(x,y;\eps)}\;,
\end{align}
where
\begin{align}
V_m(x,y;\eps):&=\theta_m(x,y)+\sfrac{2\eps}{\pi x}\sfrac{2-a\theta_m(x,y)}{\sqrt{1-y^2}}+\ord{\sfrac{\eps^2}{x^2}}\quad{\rm and}\nonumber\\
\theta_m(x,y)&:=-2\cc{x+(-1)^m\bb{m+\sfrac{1}{\pi}\arcsin y}}\;,
\end{align}
which makes it explicit that the slow manifolds have different branches, indexed by $r\in\mathbb Z$, with odd[even] $r$ giving [un]stable manifolds, separated by the set $\op C_m^\pm$ as seen in \cref{fig:Mnon}. Either form lead to the following.

\begin{lemma}[Slow dynamics]\label{thm:Me_arcs}
The dynamics on $\op M_\eps$ is given by
\begin{align}\label{M0proj}
\dot x&=1\;,\nonumber\\
\dot y&=\sfrac\eps{x}v\;,\\
\dot v&=-2+\ord{\sfrac\eps{x}}\;,\nonumber
\end{align}
whose solutions are arcs given by
\begin{align}
 y(x)&=y_0+2\eps(x_0-x)+\eps(v_0+2x_0)\log\frac{x}{x_0}+\ord{\eps/x_0}\;,\nonumber\\
 v(x)&=v_0+2(x_0-x)+\ord{\eps/x_0}\;,\label{arcapprox}
\end{align}
through any initial point $(x_0,y_0,v_0)\in\op M_\eps$.
\end{lemma}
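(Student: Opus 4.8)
The plan is to obtain the stated arc formulae \cref{arcapprox} by substituting the slow-manifold expansion of \cref{thm:Me} into the full system \cref{slowv} and integrating the resulting reduced flow. First I would confirm the reduced $v$-equation: on $\op M_\eps$ we have $y=Y(x,v;\eps)$ with $Y$ as in \cref{Ye}, so differentiating along the flow and using $\dot x=1$, $\dot y=\sfrac\eps x v$ gives $\sfrac\eps x v=Y_x+Y_v\dot v$. Since $Y_x=-\pi\cos[\pi(x+\hf v)]+\ord{\sfrac\eps x}$ and $Y_v=-\sfrac\pi2\cos[\pi(x+\hf v)]+\ord{\sfrac\eps x}$, solving for $\dot v$ yields $\dot v=-2+\ord{\sfrac\eps x}$ wherever $\cos[\pi(x+\hf v)]\neq0$, i.e. away from the turning-point curves $\op C_m^\pm$; this is exactly the third line of \cref{M0proj}, and together with $\dot x=1$, $\dot y=\sfrac\eps x v$ it establishes \cref{thm:Me_arcs}'s first claim.

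Next I would integrate this reduced system from an initial point $(x_0,y_0,v_0)\in\op M_\eps$. Using $x$ as the independent variable (legitimate since $\dot x=1$), the $v$-equation $\sfrac{dv}{dx}=-2+\ord{\sfrac\eps x}$ integrates immediately to $v(x)=v_0-2(x-x_0)+\ord{\eps\log(x/x_0)}$; keeping only the leading correction as recorded gives $v(x)=v_0+2(x_0-x)+\ord{\eps/x_0}$. For $y$, I substitute this expression into $\sfrac{dy}{dx}=\sfrac\eps x v=\sfrac\eps x\bigl(v_0+2x_0-2x\bigr)+\ord{(\eps/x)\cdot\eps}$ and integrate term by term: $\int_{x_0}^x \sfrac\eps{s}(v_0+2x_0)\,ds=\eps(v_0+2x_0)\log(x/x_0)$ and $\int_{x_0}^x -\sfrac{2\eps}{s}\,s\,ds=-2\eps(x-x_0)=2\eps(x_0-x)$, which assembles to $y(x)=y_0+2\eps(x_0-x)+\eps(v_0+2x_0)\log(x/x_0)+\ord{\eps/x_0}$, matching \cref{arcapprox}. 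Care must be taken that the neglected $\ord{\sfrac\eps x}$ term in $\dot v$ contributes only $\ord{\eps\log(x/x_0)}$ to $v$ and hence $\ord{\eps^2}$ to $y$, so it is consistent to drop it at the stated order; likewise the $\ord{(\sfrac\eps x)^2}$ remainder in $Y$ feeds into the $y$-equation at $\ord{\eps^2/x}$, safely inside the error term.

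The main obstacle, and the point that needs a careful statement rather than a formal manipulation, is the validity domain: the reduced equation $\dot v=-2+\ord{\sfrac\eps x}$ degenerates at the turning-point curves $\op C_m^\pm$ where $\cos[\pi(x+\hf v)]=0$, since there $Y_v\to0$ and the implicit solve for $\dot v$ breaks down. I would therefore phrase \cref{thm:Me_arcs} as describing the slow flow on a single normally hyperbolic branch of $\op M_\eps$, i.e. for $(x,v)$ bounded away from $\bigcup_m\op C_m^\pm$, invoking Fenichel-type persistence (as cited, \cite{f79,j95,st96}) to guarantee that $\op M_\eps$ is an honest invariant manifold there and that the $\ord{\sfrac\eps x}$ remainders are uniform on such a region with $x$ large. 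The passage of trajectories through the turning curves — the canard-type connections between stable and unstable branches — is a separate matter handled by the fast `staircase' analysis of \cref{sec:stairs} and is not claimed here; the arcs \cref{arcapprox} are the slow segments between consecutive fast transitions. A secondary technical point is simply tracking that $x_0$ is large enough that $\eps/x_0\ll1$, so that the expansion parameter $\eps/x$ of \cref{thm:Me} is genuinely small along the whole arc, which holds since $v(x)$ stays bounded and $x$ only increases.
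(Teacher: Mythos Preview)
Your proposal is correct and follows essentially the same route as the paper: differentiate the invariance condition $y=Y(x,v;\eps)$ along the flow to extract $\dot v=-2+\ord{\eps/x}$ (the paper writes this as $0=\sfrac{d}{dt}(Y-y)$ and expands, arriving at the same leading-order reduction with $\cos[\pi(x+\hf v)]$ in the denominator), then integrate the resulting system in $x$ to obtain \cref{arcapprox}. Your additional remarks on the validity domain away from $\op C_m^\pm$ and on how the $\ord{\eps/x}$ remainder propagates through the integration are more explicit than the paper's terse ``integrating \ldots\ gives \cref{arcapprox} directly,'' but the argument is the same.
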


The proofs of \cref{thm:Me} and \cref{thm:Me_arcs} follow by standard perturbation analysis and are given in Appendix~A. 
In fact one can show that the slow manifolds lie $\eps^2/x^2$ close to $\op M_0$ (rather than just $\eps/x$), outside the $\eps/x$-neighbourhood of the turning points; the result is easy to prove, as done in \cite{j20malt} for the simplified form of this system, but we will not reproduce it here.

At any instant $x$, the slow flow on $\op M_\eps$ given by \cref{thm:Me_arcs} forms a family of parabolic arcs in the $(y,u)$ plane.
Eliminating $x$ in \cref{arcapprox}, these arcs can be written as
\begin{align}\label{arcapproxyu}
 y&=y_0+\eps(v-v_0)+\eps(v_0+2x_0)\log\bb{1+\sfrac{v_0-v}{2x_0}}+\ord{\eps/x_0}\;.
\end{align}
The approximation \cref{arcapprox} (and hence \cref{arcapproxyu}) captures the slow arcs of the exact flow \cref{slow} (or \cref{slowv}) increasingly well as $x$ increases, as shown in \cref{fig:arcs}.

\begin{figure}[h!]\centering
\includegraphics[width=0.98\textwidth]{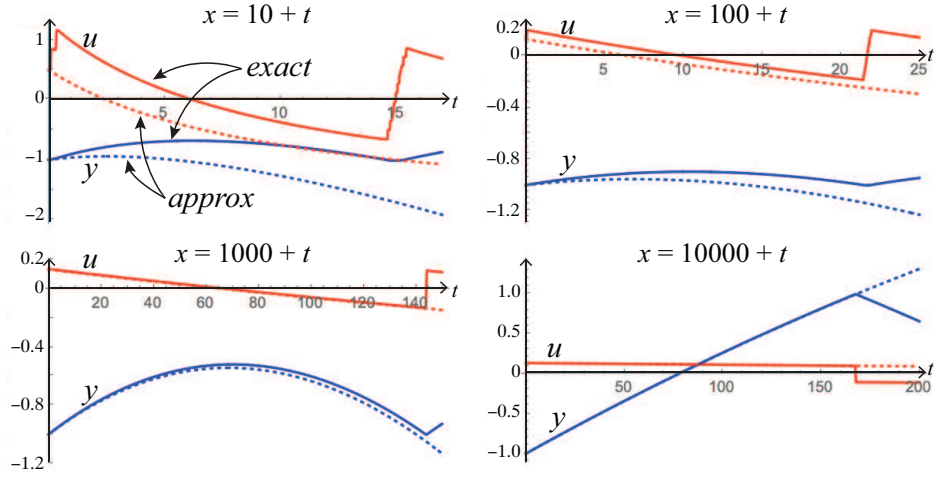}
\vspace{-0.3cm}\caption{\small\sf Plots of $y(t)$ and $u(t)$, full curves show the exact solution, and dotted curves show the slow approximation \cref{arcapprox}, which improves as $x$ increases. In each case we simulate just beyond the time when the trajectory passing a turning point and so enters the fast dynamics (at which the approximation fails). The first 3 plots show what we introduce in \cref{thm:smallarc} as {\it small arcs}, the last shows what we introduce in \cref{thm:bigarc} as a {\it large arc}. }\label{fig:arcs}\end{figure}

The arcs \cref{arcapprox} are confined to $\op M_\eps$ in the regions where the manifold is hyperbolically stable or unstable, but they can enter or leave $\op M_\eps$ at its turning points $\op C_m^\pm$. From the phase portrait in \cref{fig:smallcycleflow} it is obvious that these arcs take two forms: either they begin and end in a neighbourhood of $y=-1$ to form what we call {\it small arcs}, or they begin and end in neighbourhoods of $y=-1$ and $y=+1$, respectively, to form what we call {\it large arcs}.
\begin{figure}[h!]\centering
\includegraphics[width=0.4\textwidth]{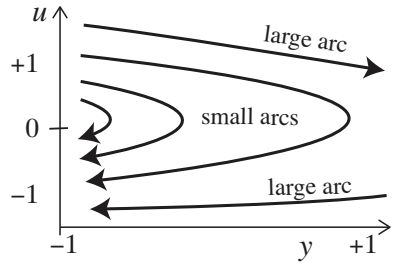}
\vspace{-0.3cm}\caption{\small\sf The projected slow flow consisting of arcs \cref{arcapprox}.
}\label{fig:smallcycleflow}\end{figure}

To describe these it is better to return to using the coordinate $u=v/x$.
To find the change in $x$ along these arcs from \cref{arcapprox} it is necessary to expand the logarithm, and while the most obvious route to do this is to approximate for large $x_0$, this must be treated with some care. As the following result shows, the time taken to traverse a small arc is $\delta x=\ord{x_0}$, which prevents a large $x_0$ approximation, and instead it is necessary to approximate for small $u_0$.

\begin{lemma}[Small arcs]\label{thm:smallarc}
The orbit through an initial point $(x_0,y_0,u_0)\in\op M_\eps$, with $u_0>0$ and
\begin{align}\label{arclim}
u_0^2< u_{0,c}^2&:=\frac{4(1-y_0)}{\eps x_0}+\ord{\sfrac{\eps}{x_0},\eps x_0u_0^3}\;,
\end{align}
passes through $u=0$ at a coordinate $(x_1,y_1,0)$ satisfying
\begin{align}\label{harcu}
 x_1&=x_0(1+\hf u_0)+\ord{\eps/x_0}\;,\nonumber\\
 y_1&=y_0+\sfrac14\eps x_0u_0^2+\ord{\eps/x_0,\eps x_0u_0^3}\;,
\end{align}
before returning to $y=y_0$ at a coordinate $(x_2,y_0,u_2)$ satisfying
\begin{align}\label{smallarcs}
%
x_2&=x_0(1+u_0+\sfrac16u_0^2+\ord{u_0^3})\cc{1+\ord{\eps/x_0}}\;,\nonumber\\
 u_2
 	&=(-u_0+\sfrac23u_0^2+\ord{u_0^3})\cc{1+\ord{\eps/x_0}}\;.
\end{align}
\end{lemma}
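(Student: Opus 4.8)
\textbf{Proof proposal for \cref{thm:smallarc}.}

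The plan is to work with the slow-flow solutions \cref{arcapprox} from \cref{thm:Me_arcs}, written back in the $u=v/x$ coordinate, and track an orbit that starts at $(x_0,y_0,u_0)$ on the attracting branch of $\op M_\eps$ near $y=-1$, rises to $u=0$, and falls back to $y=y_0$. The key observation governing the estimates is that the $v$-dynamics is $\dot v=-2+\ord{\eps/x}$, so $v$ decreases essentially linearly; an orbit that starts with $v_0=x_0u_0>0$ reaches $v=0$ (i.e.\ $u=0$) after a time increment $\delta x$ with $v_0-2\,\delta x+\ord{\eps/x_0}=0$, giving $\delta x=\hf v_0=\hf x_0u_0$ and hence $x_1=x_0+\hf x_0u_0=x_0(1+\hf u_0)+\ord{\eps/x_0}$, which is the first line of \cref{harcu}. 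For $y_1$ I substitute $x=x_1$ into the $y(x)$ line of \cref{arcapprox}: the term $2\eps(x_0-x_1)=-\eps x_0u_0$ and the logarithmic term $\eps(v_0+2x_0)\log(x_1/x_0)=\eps(x_0u_0+2x_0)\log(1+\hf u_0)$. Expanding $\log(1+\hf u_0)=\hf u_0-\tfrac18u_0^2+\ord{u_0^3}$ and collecting, the $\ord{\eps x_0u_0}$ contributions cancel and the leading surviving term is $+\tfrac14\eps x_0u_0^2$, giving $y_1=y_0+\tfrac14\eps x_0u_0^2+\ord{\eps/x_0,\,\eps x_0u_0^3}$, the second line of \cref{harcu}. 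The constraint \cref{arclim} is exactly the condition that this predicted rise $\tfrac14\eps x_0u_0^2$ stays below $1-y_0$, i.e.\ the orbit does not reach the turning locus $y=+1$ before returning — so that it is genuinely a \emph{small} arc rather than a large one; I would state it as the hypothesis under which the subsequent expansion is valid.

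For the return to $y=y_0$, I cannot simply set $x=x_0$ again because the arc is not symmetric in $x$; instead I use the $(y,u)$-form \cref{arcapproxyu} (equivalently eliminate $x$) and look for the second root $v=v_2$ of $y(v)=y_0$, other than $v=v_0$. Writing $w=v_0-v$, the equation $0=-\eps w+\eps(v_0+2x_0)\log(1+\tfrac{w}{2x_0})$ must be solved; expanding the logarithm to the needed order in $w/x_0$ (noting $w=\ord{x_0u_0}$ so $w/x_0=\ord{u_0}$ is small) and dividing through by $\eps$ gives, after cancelling the linear term, a relation of the form $w=2v_0+\ord{u_0}\cdot(\text{stuff})$ — more precisely $w=v_0-v_2$ with $v_2=-v_0+(\text{quadratic and higher in }u_0)$. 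Carrying the expansion of $\log(1+\tfrac{w}{2x_0})$ to third order and solving the resulting polynomial for $w$ perturbatively in $u_0$ yields $v_2=x_2u_2$ and, combined with $\delta x=\hf(v_0-v_2)$ along the full arc giving $x_2=x_0+\hf(v_0-v_2)$, one extracts $x_2=x_0\bigl(1+u_0+\tfrac16u_0^2+\ord{u_0^3}\bigr)$ and $u_2=v_2/x_2=-u_0+\tfrac23u_0^2+\ord{u_0^3}$, each multiplied by $\{1+\ord{\eps/x_0}\}$ to absorb the corrections in \cref{arcapprox}. The factors $\{1+\ord{\eps/x_0}\}$ rather than additive $\ord{\eps/x_0}$ errors appear because the leading quantities themselves scale with $x_0$.

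The main obstacle I expect is \emph{bookkeeping the two independent small parameters} $u_0$ and $\eps/x_0$ and the large parameter $x_0$ consistently — in particular justifying that one must expand the logarithm in $u_0$ and \emph{not} in $1/x_0$, since along a small arc the time elapsed is $\delta x=\ord{x_0u_0}$, which is comparable to $x_0$ itself when $u_0=\ord{1}$, so $x_1/x_0$ and $x_2/x_0$ are \emph{not} close to $1$ and a naive large-$x_0$ expansion of $\log(x/x_0)$ is illegitimate. Making precise which cross-terms like $\eps x_0 u_0^3$ are negligible relative to $\eps x_0 u_0^2$, and checking that the $\ord{\eps/x}$ corrections to $\dot v$ and $\dot y$ in \cref{M0proj} really do feed through only as multiplicative $\{1+\ord{\eps/x_0}\}$ factors (using that $x$ stays within a bounded multiple of $x_0$ over the arc, by \cref{arclim} and the bound on $u_0$), is the delicate part; everything else is Taylor expansion of $\log(1+\cdot)$ and solving a cubic perturbatively.
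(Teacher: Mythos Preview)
Your proposal is correct and follows essentially the same approach as the paper: use the slow-flow solutions \cref{arcapprox} from \cref{thm:Me_arcs}, solve for the points where $u=0$ and then $y=y_0$, and expand perturbatively in small $u_0$ (not in $1/x_0$), with condition \cref{arclim} arising as the requirement $y_1<1$. Your write-up is in fact considerably more detailed than the paper's own proof, which simply states that one expands $x_i-x_0$ as a power series in $u_0$ and identifies \cref{arclim} as the bounding case $y_1=+1$.
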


\begin{proof}
This follows from \cref{arcapprox} by integrating from an initial point 0 on $\op M_\eps$ with $u_0>0$, to a point 1 on $u_1=0$, and back to a point 2 with $u_2<0$, then expanding perturbatively for small $u_0$ in the form $x_i-x_0=c_{i,0}+c_{i,1}u_0+c_{i,2}u_0^2+...$ for $i=1,2$. The condition \cref{arclim} is necessary for these to exist, the bounding case $u_0=u_{0,c}$ being the solution of \cref{harcu} with $y_1=+1$, beyond which integrating forward from $(x_0,y_0,u_0)$ results in the orbit leaving $\op M_\eps$ before reaching $v=0$ (by reaching $\op C_m^+$ for some $m$).
\end{proof}

The initial condition $y_0$ satisfies $|y_0|\le1$ subject to \cref{harcu}, with the largest such arc being described by taking $y_0=-1$, for which $u_{0,c}^2=\frac{8}{\eps x_0}+\ord{\sfrac{\eps}{x_0},\eps x_0u_0^3}$.

This shows that the arc from $(x_0,y_0,u_0)$ to $(x_2,y_0,u_2)$ is almost symmetric about $u=0$, but the arcs are weakly contracting with order $u_0^2$.

If \cref{arclim} does not hold then an orbit will reach $\op C_m^+$ (for some $m\in\mathbb Z$) before it can turn around to complete a small arc, and instead create a large arc. To find these we approximate \cref{arcapprox} for large $x_0$.

\begin{lemma}[Large arcs]\label{thm:bigarc}
There exist orbits that, in $v>0$, connect pairs of points $(x_0,y_0,v_0)\in\op C_m^-$ to points $(x_1,y_1,v_1)\in\op C_m^+$, that is with $y_0=-y_1=-1$,
and respectively there exist orbits that, in $v<0$, connect pairs of points $(x_0,y_0,v_0)\in\op C_m^+$ to points $(x_1,y_1,v_1)\in\op C_m^-$, that is with $y_0=-y_1=+1$, in both cases satisfying
\begin{align}\label{bigarcs}
x_1&= x_0+\sfrac{2}{|u_0|\eps} +\ord{x_0^{-2}}\;,\nonumber\\
u_1&=u_0-\sfrac{2(2+u_0)}{|u_0|\eps x_0}+\ord{x_0^{-2}} \;,
\end{align}
provided $(x_0,u_0)$ does not satisfy \cref{arclim}.
\end{lemma}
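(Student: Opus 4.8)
The plan is to read off the large-arc connection directly from the slow-flow approximation \cref{arcapprox} in \cref{thm:Me_arcs}, but now working in the large-$x_0$ regime that is appropriate here (in contrast to \cref{thm:smallarc}, where the traversal time is $\ord{x_0}$ and such an expansion is illegitimate). First I would observe that if $(x_0,u_0)$ does \emph{not} satisfy \cref{arclim}, then the orbit from a point on $\op C_m^-$ cannot turn around at $u=0$ before hitting $y=+1$; by \cref{thm:smallarc}'s bounding analysis it instead runs up the stable branch of $\op M_\eps$ and exits at a turning point on $y=+1$, i.e.\ at some $\op C_{m'}^+$. The content of the lemma is to identify which turning point (namely the one with the same index $m$) and to compute the jump in $(x,u)$.

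Next I would use the explicit arc \cref{arcapprox}: along the slow flow $v$ decreases linearly, $v(x)=v_0+2(x_0-x)+\ord{\eps/x_0}$, so in the branch $v>0$ the endpoint is reached when $v$ has decreased from $v_0=x_0u_0>0$ down to the value $v_1=x_1 u_1$ on $\op C_m^+$. Since on $\op C_m^\pm$ we have $u_m^\pm=\tfrac{4m\mp1}{x}-2$, the endpoint constraint $v_1=x_1u_1=(4m-1)-2x_1$ combines with $v_1=v_0-2(x_1-x_0)+\ord{\eps/x_0}$ to fix $x_1$. Solving the $y$-equation is where the size of the jump in $x$ appears: along the arc, $y$ must travel from $y_0=-1$ to $y_1=+1$, a change of order unity, while $\dot y=\tfrac{\eps}{x}v$, so from \cref{arcapprox},
\begin{align}
2 = y_1-y_0 = 2\eps(x_0-x_1) + \eps(v_0+2x_0)\log\frac{x_1}{x_0} + \ord{\eps/x_0}. \nonumber
\end{align}
For large $x_0$ with $u_0=v_0/x_0$ of order one, $\log(x_1/x_0)=\log(1+(x_1-x_0)/x_0)$ is small provided $x_1-x_0=o(x_0)$, which we verify a posteriori; expanding the logarithm to leading order and keeping the dominant term $2\eps(x_0-x_1)$ (or, more carefully, $\eps(x_0-x_1)(2 - (v_0+2x_0)/x_0 + \dots)=\eps(x_0-x_1)(-u_0)+\dots$) gives $x_1-x_0 = \tfrac{2}{|u_0|\eps}+\ord{x_0^{-2}}$ after collecting the $(v_0+2x_0)/x_0 = u_0+2$ factor against the linear term; the sign/absolute-value bookkeeping is handled by treating the $v<0$ branch symmetrically. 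Substituting this $x_1$ back into $v_1=v_0-2(x_1-x_0)$ and dividing by $x_1=x_0(1+\ord{x_0^{-1}})$ yields $u_1=u_0-\tfrac{2(2+u_0)}{|u_0|\eps x_0}+\ord{x_0^{-2}}$, which is \cref{bigarcs}.

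The main obstacle is controlling the logarithmic term $\eps(v_0+2x_0)\log(x_1/x_0)$ honestly: its prefactor $\eps(v_0+2x_0)=\eps x_0(u_0+2)$ is of order $\eps x_0$, which is \emph{not} small, so one cannot discard it, and the cancellation that produces the clean $1/(|u_0|\eps)$ scaling only emerges after combining it with the linear term $2\eps(x_0-x_1)$. I would therefore expand carefully, writing $x_1=x_0+\xi$ with $\xi$ to be determined, substitute $\log(1+\xi/x_0)=\xi/x_0 - \xi^2/(2x_0^2)+\dots$, collect powers, check that the leading balance indeed forces $\xi=\ord{1/(\eps)}$ (hence $\xi/x_0=\ord{1/(\eps x_0)}$ is small and the expansion is self-consistent), and track that the neglected terms are genuinely $\ord{x_0^{-2}}$. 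The remaining points—that the arc stays on the stable branch of $\op M_\eps$ until the turning point, and that the index of the terminal $\op C^+$ matches $m$—follow from the monotonic decrease of $v$ together with the density-$2/x$ spacing of the curves $\op C_m^\pm$ in \cref{fold}, since over the arc $x$ changes by only $\ord{1/\eps}$ whereas... the relevant geometry is exactly that encoded already in \cref{fig:Mnon} and \cref{thm:smallarc}.
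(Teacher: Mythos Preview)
Your proposal is correct and follows essentially the same approach as the paper: substitute into \cref{arcapprox}, impose the endpoint conditions $y_0=\mp1$, $y_1=\pm1$, and expand for large $x_0$. The paper's own proof is in fact much terser than yours---it simply says ``this follows from \cref{arcapprox} by direct calculation \ldots\ approximating for large $x_0$''---so your careful bookkeeping of the logarithmic term and the leading-balance argument $2=\eps u_0\xi+\ldots$ is exactly the computation the paper leaves implicit. Your closing remarks about the index $m$ being preserved and the orbit staying on a single stable branch are reading slightly more into the statement than the paper intends (the index $m$ in $\op C_m^\pm$ is used generically there, and the paper does not argue this point), but this does not affect the derivation of \cref{bigarcs}.
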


\begin{proof}
Again this follows from \cref{arcapprox} by direct calculation, integrating for $v>0$ from an initial point 0 on $\op C_m^-$ with $y=-1$ to a point 1 on $\op C_m^+$ with $y=+1$, or for $v<0$ from point 0 on $\op C_m^+$ with $y=+1$ to point 1 on $\op C_m^-$ with $y=-1$, and approximating for large $x_0$. The condition \cref{arclim} must be violated for these to exist, otherwise integrating forward from $(x_0,y_0,u_0)\in\op C_m^-$ in the first case, or backward from $(x_1,y_1,u_1)\in\op C_m^-$ in the second case, results in the orbit reaching $v=0$ and forming a closed arc such that it never reaches $\op C_m^+$.
\end{proof}

We return to these two lemmas in \cref{sec:bigsmall} to describe how large and small arcs form cycles, connected by the fast dynamics outside $\op M_\eps$.

\bigskip
\subsection{Fast dynamics for large $x$: staircases}\label{sec:stairs}

To obtain the fast dynamics outside the $\eps$-neighbourhood of $\op M_0$, we again introduce the coordinate $v=ux$ to obtain the system \cref{slowv}. In typical slow-fast analysis following e.g. \cite{f79,j95}, one then changes to a fast timescale $\tau=t\eps/x$, but clearly the $x$-dependence of this timescale brings complications. In fact we can proceed more directly.

The fast dynamics that occurs in the region $|y|\le1$ between the different branches of $\op M_0$ is indeed described well by changing timescale. For this one may fix some $x_0$ and rescale time as $\tau=t\eps/x_0$, and analyse the fast dynamics for small times $t=\ord{\eps/x_0}$ around this, as $x=x_0+t$. Away from the turning points $\op C_m^\pm$, these trajectories will simply connect the attracting and repelling branches of $\op M_\eps$ to order $\ord{\eps/x_0}$.

More interesting is what happens in $|y|>1$, in particular in the region $|y|=1+\ord{\eps/x_0}$. Preliminary simulations reveal that the fast orbits do not pass through the layer $u\in[-1,+1]$ as simple straight lines as $\eps\rightarrow0$, but rather forms steps wherever they encounter a minimum of $\dot v$ (or $\dot u$) as represented by the double-arrow trajectories in \cref{fig:types}.

To see why these steps happen,
observe from \cref{slowv} that typically for $|y|>1$ we have $\sfrac{\eps}{x_0}\dot v=\ord{1}$, hence $\dot v=\ord{x_0/\eps}$ is large, so the dynamics of $v$ (and similarly $u$) is fast compared to the slower $\dot x=\ord1$ and very slow $\dot y=\ord{\eps/x_0}$. However, if we consider a point near $\op C_m^\pm$ such that $y=\pm1+\ord{\eps/{x_0}}$ and $v=x_0u_m^\pm+\ord{\sqrt{\eps/{x_0}}}={4m\mp1}-2x_0+\ord{\sqrt{\eps/{x_0}}}$, then
$$\sfrac{\eps}{x}\dot v=\sfrac{\eps}{x}(\sfrac1x-a)v+\ord{\sfrac\eps{x_0}}	\quad\Rightarrow\quad	
	\dot v=(\sfrac1x-a)v+\ord{1}$$
is no longer large with respect to $\eps/x_0$, and the dynamics of $v$ is on a comparable scale to $x$.

This leads to stepping as described by the following.
\begin{lemma}[Fast stepping]\label{thm:stairs}
For $|y|=1+\ord{\eps/x}$, solutions of \cref{sloww} map between adjacent planes $u=u_m^\pm$ for $m\in\mathbb Z$, according to
\begin{align}\label{stairmap}
x_k&=x_{k-1}+T_{k-1}\;,\nonumber\\
y_k&=y_{k-1}+\sfrac{\eps}{x_0} v_{k-1}T_{k-1}+\ord{\sfrac{\eps^3}{x_0^3}}\\
v_k&=v_{k-1}-4Y_\pm\;,\nonumber
\end{align}
where
\begin{align}\label{Tmap}
T_{k-1}=\frac{4\eps}{x_0\sqrt{(y_{k-1}+\sfrac{\eps}{2x_0} av_0)^2-1}}
\qquad{\rm and}\qquad Y_\pm:=\pm1\;.
\end{align}
\end{lemma}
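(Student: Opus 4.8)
Here is how I would attack the statement.

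\medskip
The plan is to collapse one ``step'' of a staircase to a one-dimensional quadrature, by passing to the fast phase of the sinusoid and treating the slow variables as frozen over the step. Introduce the phase $\phi:=\pi\bb{x+\hf v}$, so that the forcing term in \cref{slowv} is just $\sin\phi$; since $\dot x=1$ and, from the third line of \cref{slowv}, $\dot v=\bb{\sfrac1x-a}v-\sfrac x\eps\bb{y+\sin\phi}$, we obtain
\[
\dot\phi=\pi\dot x+\tfrac\pi2\dot v=\pi+\tfrac\pi2\bb{\bb{\sfrac1x-a}v-\sfrac x\eps\bb{y+\sin\phi}}=-\tfrac{\pi x}{2\eps}\bb{\hat y+\sin\phi}\;,
\]
where $\hat y:=y-\sfrac{2\eps}{x}-\sfrac{\eps v}{x^2}+\sfrac{a\eps v}{x}$. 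On $\op C_m^+$ one has $\sin\phi\equiv-1$ and $y\equiv+1$, and on $\op C_m^-$ one has $\sin\phi\equiv+1$ and $y\equiv-1$; these are exactly the planes where $\hat y+\sin\phi$ nearly vanishes, so that $\dot\phi$ — equivalently $\dot v$, or $\dot u$ — passes through its extremal, near-zero value, which is the slowing of the $v$-dynamics that produces a step.

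The key steps are then the following. First, over a single step the slow variables barely change: $\dot x=1$ gives $x=x_{k-1}+\ord{T_{k-1}}$, and $\dot y=\sfrac\eps x v=\ord\eps$ (as $|v|\le x$) gives $y=y_{k-1}+\ord{\eps T_{k-1}}$, with step duration $T_{k-1}=\ord{\sqrt{\eps/x_0}}$. Freezing $x\approx x_0$, $v\approx v_{k-1}$, $y\approx y_{k-1}$ inside $\hat y$ reduces the $\phi$-equation to the quasi-autonomous form $\dot\phi\approx-\sfrac{\pi x_0}{2\eps}\bb{\hat y_0+\sin\phi}$, with $\hat y_0=y_{k-1}+\ord\eps$, the $\ord\eps$ correction coming from the damping term $\sfrac{a\eps v}{x}$, which careful bookkeeping identifies with $\sfrac{\eps}{2x_0}av_0$ to within $\ord{\eps/x_0}$. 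Second, whenever $\hat y_0^2>1$ — which is what the hypothesis $|y|=1+\ord{\eps/x}$ amounts to on the part of the layer occupied by a staircase, complementary to the arcs of \cref{thm:bigarc} and \cref{thm:smallarc} — the factor $\hat y_0+\sin\phi$ keeps a fixed sign, so $\phi$ is strictly monotone; a step is then precisely one period of $\phi$, $\Delta\phi=\mp2\pi$, i.e. $\Delta v=\mp4=-4Y_\pm$, carrying the orbit from $u=u_m^\pm$ to the adjacent same-sign plane $u=u_{m\mp1}^\pm$ while crossing the intervening opposite-sign plane transversally (there $\sin\phi=\pm1$, so $y+\sin\phi\approx\pm2$ is bounded away from $0$ and $\dot\phi$ stays large). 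This gives the $v$-update of \cref{stairmap}.

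Third, the step time is a classical quadrature,
\[
T_{k-1}=\int_{t_{k-1}}^{t_k}\!dt=\int\!\frac{d\phi}{\dot\phi}=\frac{2\eps}{\pi x_0}\int_0^{2\pi}\!\frac{d\phi}{\hat y_0+\sin\phi}=\frac{4\eps}{x_0\sqrt{\hat y_0^2-1}}\;,
\]
using $\int_0^{2\pi}\bb{a+\sin\phi}^{-1}d\phi=2\pi\,\bb{a^2-1}^{-1/2}$ for $a>1$ (the signs recombining to give the same expression when $\hat y_0<-1$); with $\hat y_0=y_{k-1}+\sfrac{\eps}{2x_0}av_0+\ord{\eps/x_0}$ this is \cref{Tmap}. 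Finally, $x_k=x_{k-1}+T_{k-1}$ is immediate from $\dot x=1$, and integrating $\dot y=\sfrac\eps x v$ over the step — in which $x=x_0+\ord{T_{k-1}}$ and $v$ stays within $\ord1$ of $v_{k-1}$ — gives $y_k=y_{k-1}+\sfrac\eps{x_0}v_{k-1}T_{k-1}+\ord{\sfrac{\eps^3}{x_0^3}}$, completing \cref{stairmap}.

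The main obstacle is the error control I have glossed over: showing that freezing $x,y,v$ in the reduced $\phi$-equation perturbs the quadrature for $T_{k-1}$, and the integral for $y_k$, only within the stated orders, and showing that $\phi$ genuinely advances by a full period rather than the orbit stalling or reversing near a turning-point plane. Both rest on the regime hypothesis $|y|=1+\ord{\eps/x}$ with the sign that makes $\hat y_0^2>1$, and on the fact — anticipated in the discussion just before the lemma — that near $\op C_m^\pm$ the velocity $\dot v$ slows only to $\ord1$ rather than to the $\ord{x_0/\eps}$ of the generic region, which is what keeps the estimates uniform in $m$. Concatenating these local step maps into the full staircases of \cref{fig:bigcyc} and \cref{fig:smallcyc}, alternating with the arcs of \cref{thm:bigarc} and \cref{thm:smallarc}, is deferred to \cref{sec:bigsmall}.
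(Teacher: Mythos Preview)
Your argument is correct and is essentially the paper's own proof: both freeze the slow variables $x,y,v$ over one step, reduce to an autonomous equation for a phase of the sinusoid, and compute the step time as $\int_0^{2\pi}\!d\theta/(\text{const}+\text{trig}\,\theta)$. The only cosmetic differences are that the paper uses the phase $\omega=\tfrac\pi2(v-v_0)$ rather than your $\phi=\pi(x+\tfrac12 v)$ (so it freezes $x\to x_0$ inside the sine explicitly, whereas your $\phi$ absorbs that drift), and the paper evaluates the integral by the residue substitution $\psi=e^{i\omega}$ rather than quoting $\int_0^{2\pi}(a+\sin\phi)^{-1}d\phi=2\pi(a^2-1)^{-1/2}$.
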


The proof of this is not standard, though quite brief, so we include it below. Before the proof, we illustrate the approximation in \cref{fig:stairs}, plotting an exact solution of \cref{slowv} and the points obtained by iterating the map \cref{stairmap}.

\begin{figure}[h!]\centering\includegraphics[width=\textwidth]{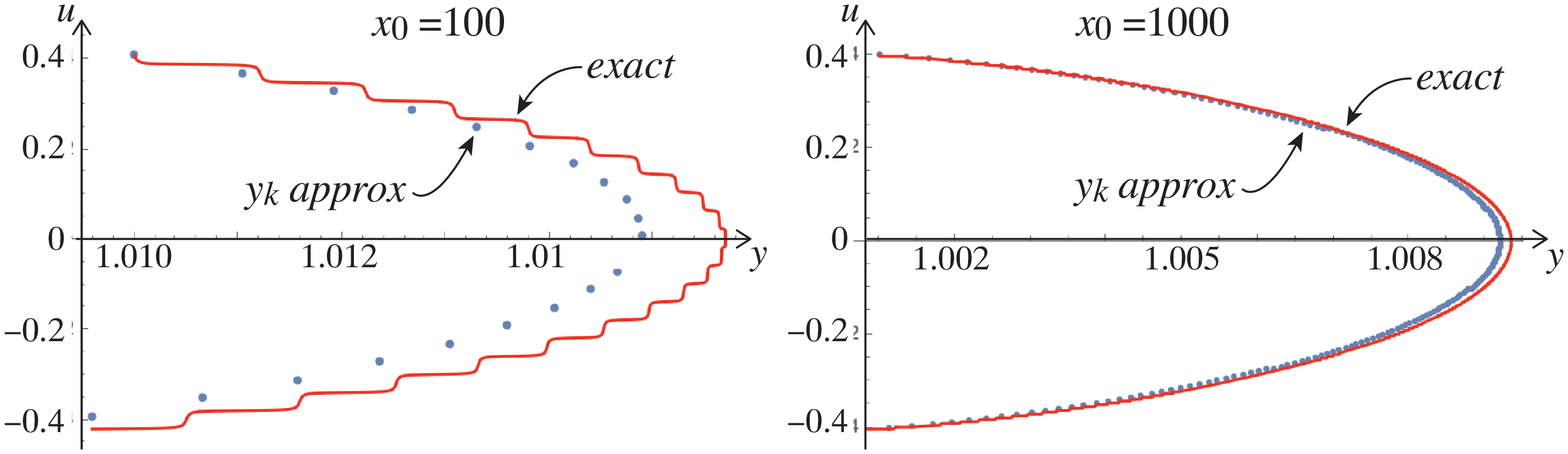}
\vspace{-0.3cm}\caption{\small\sf Staircases and their approximation for $\eps=0.1$. (The accuracy of the approximation improves with $x_0$ but is independent of $\eps$).
}\label{fig:stairs}\end{figure}

Notice that, if an orbit begins at some $(x_0,+1,v_0)$, the distance between turning points is of order $1/x_0$, so the number of turning points traversed in a staircase is of order $v_0=u_0x_0$, hence the ($\eta$ or) $y$-amplitude of a staircase is of size
\begin{align}
y_k-y_0\sim\sfrac{\eps^2v_0}{x_0^2}\times v_0=\sfrac{\eps^2}{x_0^2}v_0^2=\eps^2u_0^2\;.
\end{align}
Furthermore, over a staircase like those in \cref{fig:stairs}, taken from some $(x_0,y_0,u_0)$ and returning to some $(x_k,y_0,u_k)$, we have $x_k=x_0+\ord{\eps/x_0}$, so $x_0$ is approximately fixed over the staircase, leading to a symmetric number of steps outward to those inward, at least to order $\eps/x_0$. Hence over a full staircase from some $y_0$ in a neighbourhood of $y_0=\pm1$ and back to the same $y_k=y_0$ we have $v_k=-v_0+\ord{\eps/x_0}$ or equivalently
\begin{align}\label{wholestairs}
u_k=-u_0+\ord{\eps/x_0}\;.
\end{align}

\begin{proof}[Proof of \cref{thm:stairs}]

To analyse the dynamics in this region, define local coordinates $(\eta,\omega)$ near a turning point $(x_0,y_0,v_0)\in\op C_m^\pm$~, by letting
\begin{align}
y=(1+\eta)Y_\pm\;,\qquad v=v_0+\sfrac2{\pi}\omega\;,
\end{align}
where
\begin{align}
-\sin\sq{\pi (x_0+\hf v_0)}=Y_\pm:=\pm1\;.
\end{align}
The sinusoidal term in \cref{slowv} then becomes
\begin{align*}
\sin\left(\pi x_0\left(1+\sfrac{u}{2}\right)\right)&=\sin\sq{\sfrac{\pi}{2}(v-v_0)+\pi(x_0+\hf v_0)}\\
&=-Y_\pm\cos\sq{\sfrac{\pi}{2}(v-v_0)}\;,
\end{align*}
and so \cref{slowv} becomes
\begin{align}\label{sloww}
\dot x&=1\;,\nonumber\\
\dot \eta &=\sfrac{\eps}{x_0} (v_0+\sfrac{2}{\pi}\omega)\;,\\
\sfrac{2\eps}{\pi x_0} \dot \omega &=-\sfrac{\eps a}{x_0}(v_0+\sfrac{2}{\pi}\omega)-(1+\eta)Y_\pm+Y_\pm\cos\omega\;.\nonumber
\end{align}
%
%
In a time $\Delta t=T=\ord{\eps/x_0}$ we have $\eta=\eta_0+\sfrac{\eps}{x_0} v_0T\cc{1+\ord{\eps/x_0}}$, therefore $\eta-\eta_0=\ord{\eps^2/x_0^2}$, so we can approximate \cref{sloww} from an initial point $(x_0,Y_\pm\pm\eta_0,u_0)$, as
\begin{align}\label{sloww0}
\dot x&=1\;,\nonumber\\
\dot y &=\sfrac{\eps}{x_0} v_0+\ord{\eps^2/x_0^2}\;,\\\nonumber
\sfrac{2\eps}{\pi x_0} \dot \omega &=-\sfrac{\eps}{x_0} av_0-(1+\eta_0)Y_\pm+Y_\pm\cos\omega+\ord{\eps^2/x_0^2}\;.
\end{align}
We can use this to find the distance travelled in $y$ (equivalently $\eta$) from a point with coordinate $u=u_{m}^\pm$ to the subsequent $u_{m\mp1}^\pm$. We have just to integrate, 
\begin{align}
-\sfrac{\pi x_0}{2\eps}\int_0^T dt &=\int_0^{-2\pi Y_\pm} \frac{d\omega}{\sfrac{\eps}{x_0} av_0+(1+\eta_0-\cos\omega)Y_\pm}+\ord{\sfrac{\eps^2}{x_0^2}}\;,
\end{align}
which is solved with a simple use of Cauchy's residue theorem. First rescale $\omega=-Y^\pm\omega'$, then substitute $\psi=e^{i\omega}$, giving
\begin{align}
\sfrac{\pi x_0}{2\eps}T
&=-2\int_0^{2\pi} \frac{d\omega'}{-\sfrac{\eps}{x_0} av_0Y_\pm-2-2\eta_0+e^{i\omega'}+e^{-i\omega'}}+\ord{\sfrac{\eps^2}{x_0^2}}\nonumber\\
&=2i\int_0^{2\pi} \frac{d\psi}{\psi^2-2(1+\eta_0+\sfrac{\eps}{2x_0} av_0Y_\pm)\psi+1}+\ord{\sfrac{\eps^2}{x_0^2}}\;.
\end{align}
The integrand has poles at $\psi_\pm=1\pm\sfrac{\eta_0}{Y_\pm}\pm\sqrt{(1+\eta_0+\sfrac{\eps}{2x_0} av_0Y_\pm)^2-1}$, and only the $\psi_-$ pole is inside the unit circle, with residue $\frac{1}{\psi_--\psi_+}$, so we have
\begin{align}
\sfrac{\pi x_0}{2\eps}T
&=2i\times2\pi i\times\frac{1}{\psi_--\psi_+}+\ord{\sfrac{\eps^2}{x_0^2}}\nonumber\\
&=-4\pi \frac{-1}{2\sqrt{(1+\eta_0+\sfrac{\eps}{2x_0} av_0Y_\pm)^2-1}}+\ord{\sfrac{\eps^2}{x_0^2}}\nonumber\\
&=\frac{2\pi}{\sqrt{(y_0+\sfrac{\eps}{2x_0} av_0)^2-1}}+\ord{\sfrac{\eps^2}{x_0^2}}\;.
\end{align}
Hence the time taken to complete a step between two planes $u_{n,\pm}$ for adjacent $n$ and $n+1$ is
\begin{align}
T&=\frac{4\eps}{x_0\sqrt{(y_0+\sfrac{\eps}{2x_0} av_0)^2-1}}+\ord{\sfrac{\eps^3}{x_0^3}}\;,
\end{align}
consistent with the assumption that $T=\ord{\eps/x_0}$.
Taking the second row of \cref{sloww} as
\begin{align}\label{sloweta}
\dot \eta&=\sfrac{\eps}{x_0} v_0+\ord{\sfrac{\eps^2}{x_0^2}}\;,
\end{align}
we can now integrate to find the change in $y$ (equivalently $\eta$) over the time $T$ to be
\begin{align}
y_k&=y_{k-1}+\sfrac{\eps}{x_0} v_{k-1}T+\ord{\sfrac{\eps^3}{x_0^3}}\nonumber\\
&=y_{k-1}+\frac{4\eps^2 v_{k-1}}{x_0^2\sqrt{(y_{k-1}+\sfrac{\eps}{2x_0} av_0)^2-1}}+\ord{\sfrac{\eps^3}{x_0^3}}\;.
\end{align}
Lastly $\omega_k=\omega_{k-1}-2\pi Y_\pm$ translates into
\begin{align}
v_k=v_{k-1}-4Y_\pm\;,
\end{align}
completing the map in \cref{stairmap}-\cref{Tmap}.
\end{proof}

\bigskip
\section{Types of cycle in the nonlinear system}\label{sec:bigsmall}

Let us now piece together the three main features of the dynamics found in \cref{sec:slowfast}, namely the slow small arcs from \cref{thm:smallarc}, the slow large arcs from \cref{thm:bigarc}, and the fast staircases from \cref{thm:stairs}.

We will assume that the slow dynamics on $\op M_\eps$, and the fast dynamics outside $\op M_\eps$, can be concatenated simply at the turning points $\op C_m^\pm$. 
More precisely:

 \begin{assumption}[Matching of slow and fast flows]\label{ass1}
The matching between orbits of the slow flow given by \cref{thm:Me_arcs}, and the fast staircases given by \cref{thm:stairs}, in the system \cref{slow}, is equivalent to concatenating the two flows simply at the turning points $\op C_m^\pm$, up to higher order perturbations.
 \end{assumption}


To prove this requires a lengthy but straightforward application of well established methods. It consists of matching between the slow and fast flows, of which we have derived only the leading order asymptotics in \cref{sec:slowfast}, 
using standard geometric singular perturbation theory \cite{f79,j95}, in particular that related to folds in slow manifolds and local equivalence to a Riccati equation \cite{rosov,krupa05}.

Since we have taken the simple (piecewise linear) ramp function \cref{sign} for $\lambda$, the hyperbolicity of the slow manifolds $\op M_\eps$ holds all the way up to the boundaries of the layer $z=\pm\eps$ (though not in $|z|>\eps$). If we had taken a smooth function for $\lambda$ in terms of $z$, as is common in studies of the regularization of nonsmooth systems, e.g. \cite{j20malt,bonet2016,hk15}, then matching would also be necessary to determine how slow or fast solutions behave as the approach the boundary $|z|=\eps$, because normal hyperbolicity of $\op M_0$ would be lost in a neighbourhood of $|z|=\eps$. Similar to proving \cref{ass1}, this would be a lengthy but straightforward application of well established methods, one that we avoid here.

The precise results of such matching will not affect the qualitative features of the flow that we identify in this section, so we will formulate our main results that follow only qualitatively, leaving more rigorous statements to perhaps be formulated in future work.

For this section we again return to the coordinate system $(x,y,u)$ rather than using $v=xu$.

\subsection{Large cycles}\label{sec:bigcycles}

\begin{theorem}[Large cycles]\label{thm:bigcycles}
For large enough $x$ and under \cref{ass1}, there exist large cycles that consist of slow arcs given by \cref{thm:bigarc}, which travel between $y=-1$ and $y=+1$ in either direction, connected by fast staircases given by \cref{thm:stairs}. The $u$-amplitude of these cycles shrinks with each new cycle.
\end{theorem}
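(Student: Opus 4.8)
The plan is to build a large cycle by concatenating, in the order dictated by the projected slow flow of \cref{fig:smallcycleflow}, the four pieces already available to us: a large arc in $v>0$ (ascending from $y=-1$ to $y=+1$, \cref{thm:bigarc}), a fast staircase near $y=+1$ (\cref{thm:stairs}), a large arc in $v<0$ (descending from $y=+1$ back to $y=-1$, \cref{thm:bigarc} again), and a fast staircase near $y=-1$ (\cref{thm:stairs} again). Under \cref{ass1} each of these meets the next at a turning point $\op C_m^\pm$, so the composition is a well-defined return map on, say, a cross-section transverse to the flow near $y=-1$; closing the cycle amounts to finding a fixed point of this map, or — since we only claim qualitative existence — showing the map is a contraction in the relevant variable so that a (unique, attracting) cycle exists for $x$ large enough.

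The key computation is to track the pair $(x,u)$ through one full return. Starting from $(x_0,-1,u_0)$ with $u_0>0$ and with $(x_0,u_0)$ violating \cref{arclim} so that a large arc (not a small arc) is taken: the ascending large arc of \cref{thm:bigarc} gives $x_1 = x_0 + \tfrac{2}{u_0\eps}+\ord{x_0^{-2}}$ and $u_1 = u_0 - \tfrac{2(2+u_0)}{u_0\eps x_0}+\ord{x_0^{-2}}$, landing on $\op C_m^+$ at $y=+1$. The upper staircase of \cref{thm:stairs} then, by \cref{wholestairs}, returns $u$ to $-u_1+\ord{\eps/x_0}$ while changing $x$ only by $\ord{\eps/x_0}$; so we enter the descending large arc at $(x_1',+1,-u_1)$ with $x_1'=x_1+\ord{\eps/x_0}$. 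The descending large arc (the $v<0$ case of \cref{thm:bigarc}, with $|u_0|$ there equal to $u_1$) produces $x_2 = x_1' + \tfrac{2}{u_1\eps}+\ord{x_0^{-2}}$ and returns to $y=-1$ with $u$-coordinate of magnitude $u_1 - \tfrac{2(2+u_1)}{u_1\eps x_1}+\ord{x_1^{-2}}$ but opposite sign; a final lower staircase flips the sign again, yielding the new outgoing datum $(x_{\rm new}, -1, u_{\rm new})$ with
\begin{align}\label{eq:returnmap}
u_{\rm new} &= u_0 - \frac{2(2+u_0)}{u_0\eps x_0} - \frac{2(2+u_1)}{u_1\eps x_1}+\ord{x_0^{-2}}\;,
\end{align}
and $x_{\rm new} = x_0 + \tfrac{4}{u_0\eps}+\ord{1}$ to leading order. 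Since $u_1<u_0$ and both correction terms are strictly negative (for $u_0>0$, $2+u_0>0$), we read off $u_{\rm new}<u_0$: the $u$-amplitude strictly decreases from one cycle to the next, which is exactly the claim. One also checks that the decrement is $\ord{1/(\eps x_0)}$, hence genuinely nonzero but small for large $x_0$, so the cycle persists (the amplitude drifts slowly downward rather than collapsing), and that $(x_{\rm new},u_{\rm new})$ still violates \cref{arclim} provided $u_0$ is not too small, so the construction iterates; when $u_0$ eventually becomes small enough that \cref{arclim} holds, the orbit transitions to the small-cycle regime of \cref{sec:smallcycles}.

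The main obstacle is not the arithmetic of \cref{eq:returnmap} but justifying that the concatenation at the turning points is legitimate, i.e. that the true orbit of \cref{slow} really does pass from slow arc to fast staircase near each $\op C_m^\pm$ with only higher-order error — this is precisely the content we have quarantined into \cref{ass1}, whose proof requires the fold/Riccati local analysis of \cite{rosov,krupa05} adapted to the $x$-dependent hyperbolicity scale $x/\eps$. Granting \cref{ass1}, a secondary point needing care is uniformity: the estimates in \crefs{thm:bigarc}{thm:stairs} carry $\ord{x_0^{-2}}$ and $\ord{\eps/x_0}$ errors, and one must check these remain dominated by the $\ord{1/(\eps x_0)}$ amplitude decrement over the $\ord{1/(u_0\eps)}$-long excursion in $x$, which holds once $x_0$ is large enough relative to $\eps$ and $u_0$ is bounded below — hence the hypothesis ``for large enough $x$''. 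With these in hand the return map is a well-defined, amplitude-decreasing map, giving the asserted family of large cycles.
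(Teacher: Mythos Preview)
Your overall architecture matches the paper's proof: concatenate ascending large arc, upper staircase, descending large arc, lower staircase, and read off a return map on $\{y=-1,\,u>0\}$. The gap is in the arithmetic you said was not the obstacle.

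When you apply \cref{thm:bigarc} to the descending arc you plug the \emph{magnitude} $u_1$ into the formula, but the formula $u_{\rm end}=u_{\rm start}-\tfrac{2(2+u_{\rm start})}{|u_{\rm start}|\eps x_{\rm start}}$ uses the \emph{signed} value. With $u_{\rm start}=-u_1<0$ one gets
\[
u_{\rm end}=-u_1-\frac{2(2-u_1)}{u_1\eps x_1}\;,
\]
so after the lower staircase flips the sign,
\[
u_{\rm new}=u_0-\frac{2(2+u_0)}{u_0\eps x_0}\;+\;\frac{2(2-u_1)}{u_1\eps x_1}\;+\ord{x_0^{-2}}\;,
\]
not your \cref{eq:returnmap}. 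The second correction is \emph{positive}, so your argument ``both correction terms are strictly negative'' collapses. The decrease is real but comes from a cancellation: writing $\tfrac{2(2\pm u)}{u}=\tfrac{4}{u}\pm2$, the $\pm\tfrac{4}{u\eps x}$ parts cancel to leading order (using $u_1\approx u_0$, $x_1\approx x_0$), and what survives is $-\tfrac{2}{\eps x_0}-\tfrac{2}{\eps x_1}\approx-\tfrac{4}{\eps x_0}$. That is the paper's return map $u_4=u_0-\tfrac{4}{\eps x_0}+\ord{x_0^{-2}}$.

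Because the decrement is only $\ord{1/(\eps x_0)}$ while $x_0$ grows by $\ord{1/(\eps u_0)}$ each cycle, monotone decrease alone does not guarantee $u$ ever reaches the small-arc threshold \cref{arclim}; the paper closes this by passing to $w=1/(xu)$, for which the map becomes $w\mapsto w/(1-w^2)$ to leading order, forcing $w$ to grow past $1$ and hence $u$ through zero. Your closing remark that the orbit ``eventually'' transitions to small cycles needs this extra step.

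A minor point: the talk of fixed points and contractions is misplaced here, since $x$ advances irreversibly and there is no periodic orbit to find; the theorem only asserts the existence of orbits that execute these cycles with shrinking amplitude, which is exactly what the corrected return map delivers.
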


These large cycles are simply formed by concatenating slow large arcs given by \cref{thm:bigarc}, with the fast staircases given by \cref{thm:stairs}, as illustrated in \cref{fig:bigcycscales}. Under \cref{ass1} the cycles so obtained correspond to leading order to solutions of the full system \cref{slow}.
\begin{figure}[h!]\centering
\includegraphics[width=0.98\textwidth]{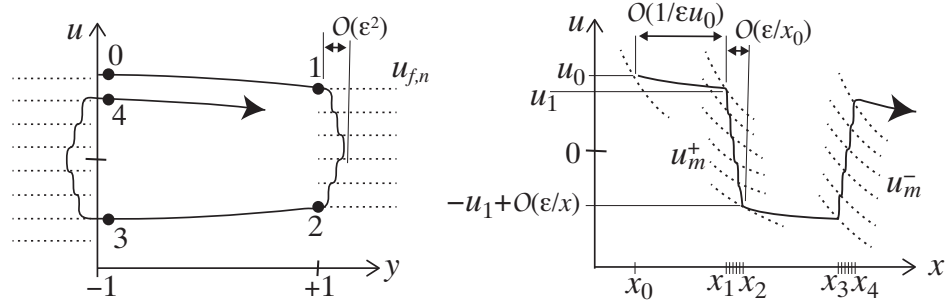}
\vspace{-0.3cm}\caption{\small\sf Large cycles formed by concatenating the large arcs of \cref{thm:bigarc} with the fast staircases of \cref{thm:stairs}, shown in $(y,u)$ and $(x,u)$.}\label{fig:bigcycscales}\end{figure}

\begin{proof}[Proof of \cref{thm:bigcycles}]

Let us consider an arc that starts from a point $(x_0,y_0,u_0)$$\in\op C_m^-$, evolves via a large arc to $(x_1,y_1,u_1)\in\op C_m^+$, then via a staircase to $(x_2,y_2,u_2)\in\op C_m^+$, then via a second large arc to $(x_3,y_3,u_3)\in\op C_m^-$, and completes the cycle with a staircase to $(x_4,y_4,u_4)\in\op C_m^-$. Such a cycle is illustrated in \cref{fig:bigcycscales}.
Then by definition the $y$-coordinates are $y_0=y_4=y_3=-1$ and $y_1=y_2=+1$, and the other coordinates are as follows.

Starting from $(x_0,-1,u_0)$ we apply \cref{thm:bigarc} to reach $(x_1,y_1,u_1)$, then \cref{thm:stairs} to reach $(x_2,y_2,u_2)$, then \cref{thm:bigarc} again to reach $(x_3,y_3,u_3)$, and finally  \cref{thm:stairs} once more to reach $(x_4,y_4,u_4)=(x_3,-1,-v_3)$. This gives, to leading order in $\eps/x_0$ and $u_0$,
\begin{align*}
(x_1,y_1,u_1)&\approx(x_0+\sfrac{2}{|u_0|\eps},+1,u_0-\sfrac{2(2+u_0)}{|u_0|\eps x_0})\;,\nonumber\\
(x_2,y_2,u_2)&=(x_1,+1,-u_1)\;,\nonumber\\
(x_3,y_3,u_3)&\approx(x_2+\sfrac{2}{|u_2|\eps},-1,u_2-\sfrac{2(2+u_2)}{|u_2|\eps x_2})\;,\nonumber\\
(x_4,y_4,u_4)&=(x_3,-1,-u_3)\;,
\end{align*}
and putting these together we have the return map to the section $$\cc{y=-1,\;u>0}$$ given by
\begin{align}\label{bigmap}
(x_4,y_4,u_4)
	&=\bb{x_0+\sfrac{4}{\eps u_0}+\ord{\sfrac1{x_0}}
	, -1,
	u_0-\sfrac{4}{\eps x_0}+\ord{\sfrac1{x_0^2}}	}\;.
\end{align}
Thus these {large cycles} are dissipative, returning to $y=-1$ with a $u$-coordinate that shrinks by an amount $\sfrac4{\eps x_0}$ with each subsequent cycle.
For this to happen it is important to notice that indeed $u$ decreases uniformly from positive to negative value in the map \cref{bigmap}, which need not be the case if, for example, \cref{bigmap} gave $u_4=u_0-\sfrac{4}{\eps x_0^r}$ for $r>1$, in which case $u$ could tend towards a positive limit. To show that indeed $u$ decreases without bound, introduce a new variable $v=1/xu$, for which \cref{bigmap} gives $v_4=v_0/(1-v_0^2)$ to leading order. From small positive values this grows until it passes $v=1$, then becomes negative which, since $x$ is strictly positive, corresponds to $u$ passing through zero. 
\end{proof}

As $u$ shrinks, eventually it will satisfy \cref{arclim}, that is $u_0^2< u_{0,c}^2\approx\frac{8}{\eps x_0}$, and the orbit must then transition to a small cycle.

\subsection{Small cycles}\label{sec:smallcycles}

\begin{theorem}[Small cycles]\label{thm:smallcycles}
For large enough $x$ and under \cref{ass1}, there exist small cycles consisting of slow arcs given by \cref{thm:smallarc}, starting and ending at $y=-1$, connected by fast staircases given by \cref{thm:stairs}. The $u$-amplitude and $y$-amplitude shrinks of these shrinks with each new cycle.
\end{theorem}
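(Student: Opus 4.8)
The plan is to mirror the proof of \cref{thm:bigcycles}, but now assembling small arcs (\cref{thm:smallarc}) with fast staircases (\cref{thm:stairs}) instead of large arcs. Since small arcs begin and end near $y=-1$, the natural return section is $\cc{y=-1,\;u>0}$, and a single small cycle consists of exactly one small arc followed by one staircase. So I would start from a point $(x_0,-1,u_0)\in\op M_\eps$ with $u_0>0$ satisfying the smallness condition \cref{arclim} with $y_0=-1$, i.e. $u_0^2<u_{0,c}^2\approx\frac{8}{\eps x_0}$, which by \cref{thm:smallarc} guarantees the orbit turns around at $u=0$ (rather than escaping through $\op C_m^+$) and returns to $y=-1$ at a point $(x_2,-1,u_2)$ with, to leading order, $x_2=x_0(1+u_0+\tfrac16u_0^2)$ and $u_2=-u_0+\tfrac23u_0^2$.

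Next I would apply \cref{thm:stairs} to the fast staircase issuing from $(x_2,-1,u_2)$ (now with $u_2<0$), which by \cref{wholestairs} returns to $y=-1$ with $u_3=-u_2+\ord{\eps/x_0}$ and $x_3=x_2+\ord{\eps/x_0}$, landing back on the section $\cc{y=-1,\;u>0}$ at $(x_3,-1,u_3)$. Composing the two maps gives the small-cycle return map: to leading order $u_3=u_0-\tfrac23u_0^2+\ord{u_0^3}$ and $x_3=x_0(1+u_0+\ord{u_0^2})$. From this one reads off both conclusions of the theorem: the $u$-amplitude contracts (by the $O(u_0^2)$ term, exactly the weak contraction already noted after \cref{thm:smallarc}), hence $u_0\to0$ along the iteration, and because $y_1=y_0+\tfrac14\eps x_0 u_0^2$ from \cref{harcu}, the $y$-excursion of the arc is of size $\tfrac14\eps x_0 u_0^2$, which also shrinks as $u_0$ decreases — though one should check that $x_0$ grows slowly enough (only geometrically-slowly, $x_0\mapsto x_0(1+u_0+\cdots)$, versus $u_0^2$ quadratic decay) that the product $x_0u_0^2$ indeed decreases; tracking $w=x_0u_0^2$ through the map shows $w\mapsto w(1-\tfrac43u_0+\cdots)(1+u_0+\cdots)=w(1-\tfrac13u_0+\cdots)<w$, so the $y$-amplitude does decrease. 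Throughout, \cref{ass1} is what licenses concatenating the slow arc and the fast staircase simply at the turning point $\op C_m^-$ where $u=0$ is reached and where the return staircase departs, so the composite map represents, to leading order, a genuine orbit of \cref{slow}.

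The main obstacle is not any single estimate but the bookkeeping of the two incompatible small parameters: the arc formulas in \cref{thm:smallarc} are expansions in $u_0$ (valid precisely because the arc duration is $\delta x=\ord{x_0}$, forbidding a large-$x_0$ expansion), whereas the staircase map in \cref{thm:stairs} carries errors in powers of $\eps/x_0$; one must verify that along the iteration these remain consistently subdominant — in particular that the constraint \cref{arclim} continues to hold (it does, since $u_0^2$ shrinks faster than $8/(\eps x_0)$ grows), so that the orbit stays in the small-cycle regime rather than transitioning back, and that the $O(u_0^3)$ and $O(\eps/x_0)$ remainders do not swamp the leading $-\tfrac23u_0^2$ contraction. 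Once that consistency is in place, the two stated amplitude-shrinkage claims follow immediately from the composite return map, exactly as the dissipativity of large cycles followed from \cref{bigmap} in the proof of \cref{thm:bigcycles}. I would present the argument at the same qualitative level adopted there, citing \cref{thm:smallarc}, \cref{thm:stairs}, and \cref{ass1}, and referring to \cref{fig:smallcyc} and \cref{fig:smallcycleflow} for the geometry.
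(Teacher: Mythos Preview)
Your proposal is correct and follows essentially the same approach as the paper: concatenate one small arc (\cref{thm:smallarc}) with one staircase (\cref{wholestairs}) to obtain the return map $u_3=u_0-\tfrac23u_0^2+\cdots$, $x_3=x_0(1+u_0+\cdots)$, and then track the product $\eps x_0u_0^2$ to get the $(1-\tfrac13u_0)$ shrinkage of the $y$-amplitude, which is exactly the paper's \cref{peaksh}. One small wording slip: the concatenation under \cref{ass1} happens at the turning points $\op C_m^-$, i.e.\ where the arc lands back at $y=-1$, not at the peak $u=0$; otherwise your outline matches the paper's proof and even adds the useful consistency check that \cref{arclim} is preserved along the iteration.
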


Typically a small arc is followed by a staircase, followed by another small arc, then another staircase, and so on, forming small cycles in this way, as illustrated in \cref{fig:smallcycscales}. As in \cref{thm:bigcycles}, under \cref{ass1} these correspond to leading order to solutions of the full system \cref{slow}. Moreover we can show that these are dissipative, similarly to the large cycles, by iterating the maps from \cref{thm:smallarc} and \cref{thm:stairs}.
%
\begin{figure}[h!]\centering
\includegraphics[width=0.98\textwidth]{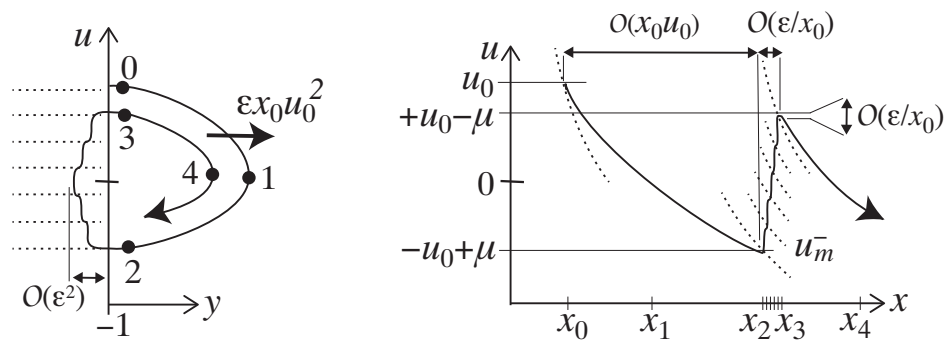}
\vspace{-0.3cm}\caption{\small\sf Small cycles formed by concatenating the large arcs of \cref{thm:smallarc} with the fast staircases of \cref{thm:stairs}, shown in $(u,y)$ and $(u,x)$, with the cycle shrinking by an amount $\mu=\sfrac23u_0^2$ each time. }\label{fig:smallcycscales}\end{figure}

\begin{proof}[Proof of \cref{thm:smallcycles}]
Take a small arc with initial condition $(x_0,-1,u_0)$, that passes through $u=0$ at $(x_1,y_1,0)$, returns to $y=-1$ at $(x_2,-1,u_2)$, then evolves through a staircase to $(x_3,-1,u_3)$, and finally passes through $u=0$ again at $(x_4,y_4,0)$. Such a cycle is illustrated in \cref{fig:smallcycscales}.

By \cref{harcu} the arc starting from $(x_0,-1,u_0)$ has a $y$-amplitude $\Delta y=1+y_1\sim\sfrac14\eps x_0u_0^2$, and by \cref{smallarcs} returns to coordinate $$\bb{x_2,-1,u_2}=\bb{x_0(1+u_0),-1,-u_0+\sfrac23u_0^2}\;.$$ Now this evolves through a staircase according to \cref{sec:stairs}, to
\begin{align}
\bb{x_3,-1,u_3}=\bb{x_0(1+u_0),-1,u_0-\sfrac23u_0^2}\;,
\end{align}
from which it is clear that $u$ decreases with each successive cycle.
This then connects to another slow arc, for which we again apply \cref{harcu} to show that this has a $y$-amplitude
$\Delta y=1+y_4\sim\sfrac14\eps x_3u_3^2
=\sfrac14\eps x_0u_0^2(1-\sfrac13u_0+\ord{u_0^2})$.

Therefore, to leading order the $y$-amplitude $\Delta y$ shrinks as
\begin{align}\label{peaksh}
\Delta y\mapsto(1-\sfrac13u_0)\Delta y+\ord{u_0^2}\;.
\end{align}

\end{proof}

Note that the time between adjacent peaks is
$x_4-x_1=x_3(1+\hf u_3)-x_1=x_2(1-\hf u_2)-x_1
=x_0u_0(1+\sfrac16u_0+\ord{u_0^2})$.
\Cref{fig:shrinkers} verifies numerically that this result holds for sufficiently large $x$.
\begin{figure}[h!]\centering
\includegraphics[width=0.85\textwidth]{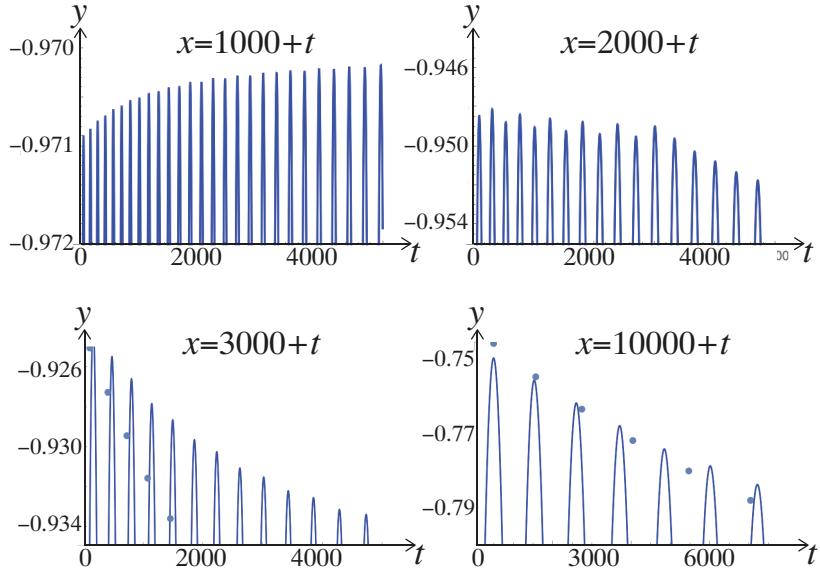}
\vspace{-0.3cm}\caption{\small\sf Plots showing the result that small arcs begin to shrink at a rate $\sfrac13$ but {\it only for large enough $x$}, shown here from $x\sim10000$. The plots show $y$ against $t$ at time $x_0+t$, from $y_0=-1$, $u_0=0.1$, with $x_0=1000,2000,3000,10000$. We only show the peaks of the arcs in $y\in[-1,+1]$, but each trajectory returns to $y=-1$ between peaks. In the last two pictures only, the dots show the predicted peaks from \cref{peaksh}, which the last picture shows forms a good approximation for large $x$. }\label{fig:shrinkers}\end{figure}

It is sensible to ask, since these small cycles shrink towards $(y,u)=(-1,0)$ with each iteration, whether they decrease to zero. However, it is clear from the system \cref{slow} that $(-1,0)$ is not an invariant, indeed since $\dot u$ keeps oscillating at $u=0$ between $0<\dot u<2$ as $x$ increases, it is surprising that solutions can shrink to $(y,u)=(-1,0)$ at all. Yet indeed by the argument above, the small cycles can only continue to shrink, and they do so by means of jumping between the slow arcs and fast staircases.

So let us estimate their size. Near $u=0$ the fast staircase must occur between folds of the slow manifold $\op M_\eps$ that are a distance of order $1/x$ apart, so the $u$-width of the cycles shrinks as $\Delta u\sim1/x$. The $y$-amplitude therefore shrinks as $\Delta y\sim\sfrac14\eps xu^2\sim\sfrac14\eps/x$. Hence the cycles occupy a region $(y,u)=(-1,0)+\sfrac1x(\eps,1)$ as $x\rightarrow\infty$, despite there being no invariant of the system at $(y,u)=(-1,0)$ to tend towards!

\subsection{The sliding-escaping boundary}\label{sec:escape}


Orbits that enter the switching layer $u\in[-1,+1]$ may either pass directly through the layer and exit from the opposite side (thus crossing from $z>+\eps$ to $z<-\eps$ or vice versa), or they may evolve onto a slow manifold $\op M_\eps$ in the region $|y|\le1$. In the latter case those orbits may remain in the slow-fast dynamics of the layer $u\in[-1,+1]$ (corresponding to sliding in the discontinuous system) for some finite time, or they may become trapped inside it for all future time.
\begin{theorem}[Trapping region]\label{thm:trap}
For large enough $x$ and under \cref{ass1}, orbits that enter the switching layer $|z|\le\eps$ from $|z|>\eps$ with $y$ values in the interval $[\sfrac13+\ord{\sfrac\eps x},1]$ can escape from the layer back into motion in $|z|>\eps$, while those entering sliding with $y\in[-1,\sfrac13+\ord{\sfrac\eps x}]$ become trapped in sliding for all later times.
\end{theorem}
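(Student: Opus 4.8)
The plan is to identify the threshold $y=\sfrac13$ as the boundary in the $y$-coordinate that separates two outcomes for an orbit that has settled onto a slow manifold $\op M_\eps$ near $y=-1$: either it eventually executes a large arc (and hence reaches $y=+1$, from where it can leave the layer at the edge $u=\pm1$), or it is forever confined to small arcs shrinking toward $(y,u)=(-1,0)$. The key is the bifurcation between small-arc and large-arc behaviour already encoded in the condition \cref{arclim}: a slow orbit leaving $y=-1$ with $u$-amplitude $u_0$ stays a small arc precisely when $u_0^2<u_{0,c}^2\approx\sfrac8{\eps x_0}$, and otherwise becomes a large arc reaching $y=+1$.

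First I would track what happens to an orbit once it is trapped in small cycles. By \cref{thm:smallcycles} and in particular the amplitude recursion \cref{peaksh}, each small cycle returns to $y=-1$ with $u\mapsto u_0-\sfrac23u_0^2+\ord{u_0^3}$, so $u$ strictly decreases and tends to $0$; consequently the $y$-amplitude of successive small arcs, $\Delta y\sim\sfrac14\eps x u^2$, also tends to $0$ and the orbit is asymptotic to $(y,u)=(-1,0)$, never escaping. So an orbit that ever enters a small cycle is trapped. Next I would argue the converse in a quantitative form: suppose an orbit enters the layer onto $\op M_\eps$ at some $(x_*,y_*,u_*)$ with $y_*>\sfrac13$. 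Using \cref{thm:Me_arcs}/\cref{arcapprox} one follows the slow arc from $(x_*,y_*,u_*)$: if it is already a large arc we are done; if not, it completes a small cycle and returns to $y=-1$ with a definite $u$-value, and from there it alternates small arcs and staircases, with $u$ evolving under the composed return map. The point is that the largest small arc available from $y=-1$ has, by the remark following \cref{thm:smallarc}, critical amplitude $u_{0,c}^2\approx\sfrac8{\eps x}$, and this corresponds via \cref{harcu} to reaching a peak $y_1=+1$; the small arc launched from an intermediate $y_0$ with the maximal admissible $u_0$ reaches a peak $y_1 = y_0+\sfrac14\eps x u_0^2$, so solving $y_0 + \sfrac14\eps x u_{0,c}^2 = 1$ with $u_{0,c}^2 = \sfrac{4(1-y_0)}{\eps x}$ reproduces consistency, and the genuinely new constraint comes from requiring that the geometry of nested small arcs in the $(u,y)$ phase portrait of \cref{fig:smallcycleflow} admits a connecting orbit back out to $y>\sfrac13$; carrying out the asymptotics of the arc family \cref{arcapproxyu} shows the separatrix between "can still reach a large arc" and "collapses to $(-1,0)$" sits at $y=\sfrac13+\ord{\sfrac\eps x}$.

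Concretely the steps are: (i) write the return map on the section $\{y=-1,\,u>0\}$ by composing \cref{thm:smallarc} and \cref{thm:stairs}, obtaining $u\mapsto u-\sfrac23u^2+\ord{u^3}$ and hence monotone decay of $u$ to $0$ along trapped orbits; (ii) conclude from $\Delta y\sim\sfrac14\eps x u^2\to0$ that trapped orbits accumulate on $(-1,0)$ and never return to $|z|=\eps$; (iii) for an orbit entering at $(x_*,y_*,u_*)$, use \cref{arcapprox}/\cref{arcapproxyu} to decide whether the first slow arc is large (escape via $y=+1$, then exit at $u=\pm1$) or small (enters a small cycle); (iv) show the dividing value of $y_*$, obtained by intersecting the arc family \cref{arcapproxyu} with the turning-point locus $\op C_m^+$ at $y=+1$ and with the collapse point $(-1,0)$, is $y_*=\sfrac13$ to leading order, so $y_*\in[\sfrac13+\ord{\sfrac\eps x},1]$ escapes and $y_*\in[-1,\sfrac13+\ord{\sfrac\eps x}]$ is trapped; (v) invoke \cref{ass1} to pass from the concatenated slow/fast picture to genuine solutions of \cref{slow}. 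The main obstacle is step (iv): one must show that $y=\sfrac13$ is exactly the breakeven value, which requires a careful expansion of the logarithmic term in \cref{arcapproxyu} near the turning-point locus and a proof that no sequence of alternating small arcs and staircases can "pump" the orbit upward past a value $y<\sfrac13$ — i.e. that the small-cycle dynamics is strictly contracting in the relevant sense, which follows from combining the $\sfrac23u^2$ contraction in $u$ with the $(1-\sfrac13u_0)$ contraction in $\Delta y$ from \cref{peaksh}, but requires care to make uniform in $x$ for large $x$.
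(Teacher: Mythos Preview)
Your proposal misidentifies the escape mechanism, and as a result the derivation of the threshold $y_0=\sfrac13$ is not there. You frame the dichotomy as ``first slow arc is large (escape) versus first slow arc is small (trapped)'', but observe that an orbit entering the layer does so at $u_0=+1$, and for large $x_0$ the small-arc condition \cref{arclim} reads $u_0^2<\sfrac8{\eps x_0}\ll1$, which always \emph{fails} at $u_0=+1$. Hence the first slow arc is \emph{always} a large arc reaching $y=+1$, regardless of $y_0$; your step (iii) therefore cannot produce any threshold in $y_0$ at all.

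The actual mechanism, which the paper exploits, is what happens \emph{after} that first large arc and the ensuing staircase at $y=+1$: the orbit lands on $\op M_\eps$ at $(x_2,+1,u_2)\approx(x_1,+1,-u_1)$ by \cref{wholestairs} and then follows a second slow arc downward. The question is whether this return arc reaches the layer boundary $u=-1$ with $y>-1$ (escape) or reaches $y=-1$ with $u>-1$ (trapped, by the contraction of \cref{thm:bigcycles}). The delineating orbit is the one that hits exactly $(y_3,u_3)=(-1,-1)$. Writing the two slow arcs via \cref{arcapprox} with the boundary data $u_0=+1$, $y_1=y_2=+1$, $u_2=-u_1$, $y_3=u_3=-1$, gives four equations in $x_0,x_1,x_3,u_1,y_0$; eliminating and expanding for large $x_0$ yields $y_0=\sfrac13+\ord{(\eps x_0)^{-1}}$. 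Your step (iv), which speaks of intersecting the arc family with the ``collapse point $(-1,0)$'', does not carry this boundary information ($u_0=+1$, $u_3=-1$), and without it the number $\sfrac13$ simply does not appear.
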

Note from \cref{sec:bigcycles} and \cref{sec:smallcycles} that the trapped orbits will first form large cycles of shrinking amplitude, then eventually small arcs of shrinking amplitude, contracting gradually towards smaller $u$.


\begin{proof}[Proof of \cref{thm:trap}]
Consider an orbit that enters the switching layer at $(x_0,y_0,u_0)$ where $u_0=+1$, and with $|y_0|<1$ so the orbit enters the sliding region (the region of $y$ values for which the critical manifold $\op M_0$ exists). We have to show that if $y_0\lesssim1/3$ this orbits becomes trapped in the layer $|u|<1$, and otherwise it escapes the layer via $u=-1$.

For large $x$, this orbit encounters a branch of the slow manifold $\op M_\eps$ within a distance $\ord{\eps/x_0}$ of $u=+1$, so let us say that $(x_0,y_0,u_0)\in\op M_\eps$ within $\ord{\eps/x_0}$. The orbit then evolves via the slow flow \cref{arcapprox}, and let us assume it reaches a coordinate $(x_1,y_1,u_1)$ with $y_1=+1$. It then enters a staircase and by \cref{wholestairs} evolves to a coordinate $(x_2,y_2,u_2)$ where $y_2=+1$, and is attracted onto another branch of $\op M_\eps$ at $u=u_2+\ord{\eps/x_0}$. Finally this evolves according to the slow flow \cref{arcapprox} again to some coordinate $(x_3,y_3,u_3)$. If $u_3=-1$ and $y_3>-1$ then the orbit escapes from the layer. If $y_3=-1$ and $u_3>-1$ then the orbit enters another staircase, and by the dissipation shown in \cref{sec:bigcycles} the orbit is trapped in $|u|<1$.

The delineating case therefore is where $u_3=y_3=-1$. So let us consider this delineating orbit, which travels from $(x_0,y_0,+1)$, via the slow flow \cref{arcapprox} to $(x_1,+1,u_1)$, via a staircase to $(x_1,+1,-u_1)+\ord{\eps/x_0}$, and again via the slow flow \cref{arcapprox} to $(x_3,-1,-1)$. Applying \cref{arcapprox} to the first and last steps we have
\begin{subequations}\label{esc}
\begin{align}
 y_1&=y_0+2\eps(x_0-x_1)+\eps x_0(u_0+2)\log\frac{x_1}{x_0}+\ord{\eps/x_0}\;,\label{esc1}\\
 u_1x_1&=u_0x_0+2(x_0-x_1)+\ord{\eps/x_0}\;,\label{esc2}\\
 y_3&=y_2+2\eps(x_2-x_3)+\eps x_2(u_2+2)\log\frac{x_3}{x_2}+\ord{\eps/x_0}\;,\label{esc3}\\
 u_3x_3&=u_2x_2+2(x_2-x_3)+\ord{\eps/x_0}\;.\label{esc4}
 \end{align}
 \end{subequations}
 Substituting $y_1=y_2=-y_3=u_0=-u_3=+1$ by our boundary conditions (i.e. the locus of the trajectory described above), with $x_2=x_1$ and $u_2=-u_1$ for the staircase by \cref{wholestairs}, we are then left with four equations for $y_0$ in terms of $x_0,x_1,x_3,u_1$. We can solve \cref{esc2} and \cref{esc4} for $u_1$ and $x_3$ in terms of $x_0,x_1$, as
\begin{align}
u_1=1+\sfrac{3x_0-3x_1}{x_1}\;,\qquad x_3=4x_1-3x_0\;.
 \end{align}
 If we let $\tau=(x_1-x_0)/x_0$, then \cref{esc1} and \cref{esc3} become
\begin{subequations}
\begin{align}
 y_0&=1+2\eps x_0\tau-3\eps x_0\log(1+\tau)+\ord{\eps/x_0}\;,\\
 2&=6\tau\eps x_0-(1+4\tau)\eps x_0\log\sfrac{1+4\tau}{1+\tau}+\ord{\eps/x_0}\;,
 \end{align}
 \end{subequations}
 and approximating for small $\tau$ we can solve these to find
\begin{subequations}
\begin{align}
\tau&=\sfrac2{3\eps x_0}+\ord{1/\eps^2x_0^2}\;,\\
y_0&=\sfrac13+\ord{1/\eps^2x_0^2}\;,
 \end{align}
 \end{subequations}
proving the result.
\end{proof}

The implication is that, for large $x$, if an orbit enters sliding with $y_0\in\bb{\sfrac13+\ord{\sfrac1{\eps x_0}},1}$ then it may escape back into crossing. \Cref{fig:boundary} verifies the result with $x_0=1000$.

\begin{figure}[h!]\centering
\includegraphics[width=0.65\textwidth]{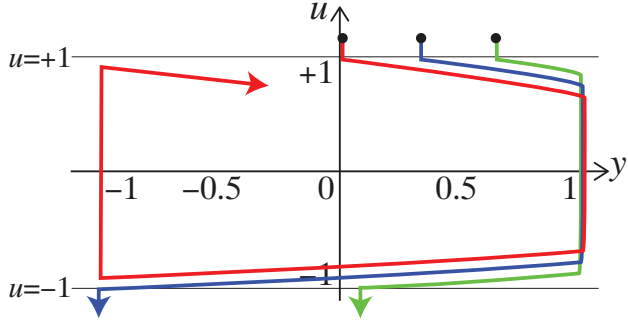}
\vspace{-0.3cm}\caption{\small\sf Simulations in the $(y,u)$ plane illustrating \cref{thm:trap}. From an initial condition $(1000,y_0,+1)$, an orbit it simulated with: $y_0=0$ (red), $y_0=1/3$ (blue), $y_0=2/3$ (green). }\label{fig:boundary}\end{figure}

Since orbits can only enter the layer at $u=+1$ where $\dot u<0$, and $\eps\dot u=-\eps a-y-\sin\sq{\sfrac32\pi x}$, we can say more completely that an orbit starting at $(x_0,y_0,+1)$ will become trapped if (to leading order)
\begin{align}
1/3>y_0>&-\eps a-\sin\sq{\sfrac32\pi x}\;,
\end{align}
and otherwise will escape the layer after only briefly sliding (with just one fast staircase) if
\begin{align}
y_0>&\max\bb{1/3,\;-\eps a-\sin\sq{\sfrac32\pi x}}\;,
\end{align}
to order $\eps/x$.

\section{Remarks on the discontinuous system, $\eps=0$}\label{sec:periodics}


Letting $\eps\rightarrow0$, the system \cref{sys} with either of the switching rules in \cref{nonlin} has two regions of smooth dynamics,
\begin{subequations}
\begin{align}
R_+&=\cc{(x,y,z)\in\mathbb R^3\;:\;z>0}\quad{\rm where}\quad \omega=\omega_+\;,\\
R_-&=\cc{(x,y,z)\in\mathbb R^3\;:\;z<0}\quad{\rm where}\quad \omega=\omega_-\;,
\end{align}
\end{subequations}
separated by a switching threshold $\surf$,
\begin{align}\qquad\;\;\;
\surf&=\cc{(x,y,z)\in\mathbb R^3\;:\;z=0}\quad{\rm where}\quad \omega\in[\omega_-,\omega_+]\;.
\end{align}

Solutions to \cref{sys0} evolve through $R_+$ and $R_-$, and can either {\it cross} transversally through $\surf$ between $R_+$ and $R_-$, or else can {\it slide} along $\surf$ for certain intervals of time. In \cref{sec:sliding} we describe the regions on $\surf$ where sliding can occur, in \cref{sec:crossing} we show that there exist periodic orbits that cross $\surf$. This section is somewhat cursory, because as we will see, standard concepts for $\eps=0$ cannot capture the rich behaviour we have derived for $\eps\ge0$ in \cref{sec:slowfast} to \cref{sec:bigsmall}.

\subsection{Sliding dynamics}\label{sec:sliding}

Taking the behaviour found in the switching layer $|z|\le\eps$ in \cref{sec:linear} to \cref{sec:bigsmall}, by letting $\eps\rightarrow0$ one observes what happens in the limit of the discontinuous system at $\eps=0$. Motion along invariant manifolds inside the layer $|z|\le\eps$ clearly becomes motion along the threshold $z=0$. This is known as {\it sliding} along $\surf$. Let us therefore compare our previous observations to what can be inferred by looking directly at the system with $\eps=0$ using the standard concept of sliding in discontinuous systems, which derive from Filippov \cite{f88}.


In essence, setting $\eps=0$ in \cref{sign} implies at $z=0$ only that the switching multiplier $\lambda$ takes a value in the interval $[-1,+1]$. To derive sliding motion, we seek a value of $\lambda$ in this interval that satisfies the equations $z=\dot z=0$ to give flow along $z=0$. These conditions can be used to derive the existence, stability, and the dynamics of sliding motion.

From the third row of \cref{sys} we first see that sliding can only occur for $|y|\le1$ (more precisely the problem $z=\dot z=0$ cannot be solved for $|y|>1$). Setting $z=\dot z=0$ we find that \cref{sys} reduces (for either function \cref{nonlin}) simply to sliding dynamics on $z=0$ given by
\begin{align}\label{slidesys}
\dot x=1\;,\quad
\dot y=0\;,\quad
\dot z=0\;.
\end{align}
So if an orbit hits the switching threshold at $(x,y,z)=(x_0,y_0,0)$, with $|y_0|\le1$, it will slide simply with a trajectory $x(t)=t+x_0$, $y(t)=y_0$, $z(t)=0$.

That trajectory may terminate if it reaches the boundary of the sliding region, where the vector field \cref{sys} is tangent to $z=0$ from either $z>0$ or $z<0$. That happens at
\begin{align}\label{Tpm}
\op T_\pm=\cc{(x,y,z)\in\surf\;:\; y=-\sin[\pi x(1\pm\hf)]=0}\;.
\end{align}
The sets $\op T_\pm$ are illustrated in \cref{fig:linslide}, and are identical to those in \cref{fig:typesl} and \cref{fig:types}.
\begin{figure}[h!]\centering
\includegraphics[width=0.9\textwidth]{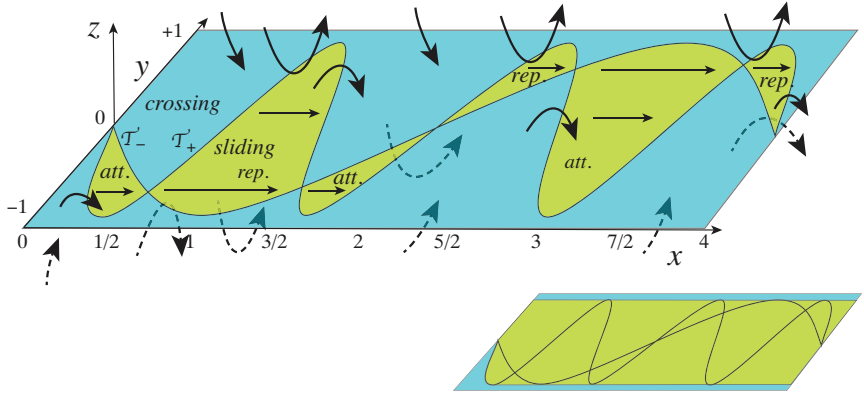}
\vspace{-0.3cm}\caption{\small\sf Sliding regions (green) and crossing regions (blue) of the linear system, bounded by the sets $\op T_+$ and $\op T_-$ at which the upper and lower vector fields are tangent to the switching threshold, respectively. The picture is the same for the nonlinear system, except that sliding occurs everywhere to the right of the curves $\op T_\pm$ as shown in the inset (bottom-right), as described below.}\label{fig:linslide}\end{figure}

This gives the impression that the sliding dynamics is simple, but this is true neither for the linear nor for the nonlinear system from \cref{nonlin}.
Regions of sliding can be attracting or repelling with respect to the dynamics outside $\surf$. This is easy to determine from the sign of $\sfrac{\partial\dot z}{\partial\lambda}$, namely if $\sfrac{\partial\dot z}{\partial\lambda}\!<\!0\;[>\!0]$ then the sliding region is attractive [repulsive]. 
The regions of attracting or repelling sliding differ between the nonlinear and linear systems \cref{non} and \cref{lin}.

For the linear system, taking \cref{lin} and solving $z=\dot z=0$ for the switching multiplier $\lambda\in[-1,+1]$, we find it takes a value of $\lambda=\sfrac{\sin(\pi\omega_+x)+\sin(\pi\omega_-x)}{\sin(\pi\omega_+x)-\sin(\pi\omega_-x)}$.
By looking at the sign of
$$\sfrac{\partial\dot z}{\partial\lambda}=\hf\cc{\sin(\hf\pi x)-\sin(\sfrac32\pi x)}\;,$$ we see that the sliding regions are
\begin{subequations}\label{linstab}
\begin{align}
{\rm attracting\;for}\quad&x\in\bb{0,\hf}\cup\bb{\sfrac32,2}\cup\bb{\sfrac52,\sfrac72}+4m\;,\quad m\in\mathbb Z\;,\\
{\rm repelling\;for}\quad&x\in\bb{\hf,\sfrac32}\cup\bb{2,\sfrac52}\cup\bb{\sfrac72,4}+4m\;,\quad m\in\mathbb Z\;,
\end{align}
\end{subequations}
on $y\in\bb{-\sin[\pi x/2],-\sin[3\pi x/2]}$, shown in \cref{fig:linslide}. These concepts are all an entirely standard application of Filippov's methods, see e.g. \cite{f88}, so we will not discuss them in greater detail. It is quite straightforward to see that these regions correspond to the attracting and repelling branches of the invariant manifold $\op M_0^{\rm lin}$ in \cref{sec:linear}. Of particular note are the so-called {\it two-fold} singularities that occur where $\op T_\pm$ intersect, and these correspond to the non-hyperbolic sets $\op C_{mn}$ in \cref{sec:linear}; for general theory of two-folds see for example chapter 13 of \cite{j18book}, also \cite{t18in3d,t07}, and references therein.

For the nonlinear system, taking \cref{non} and solving $z=\dot z=0$ for the switching multiplier $\lambda$, we now find that there are multiple solutions for $\lambda$, and that unlike in the linear system where the sliding regions are confined between $\op T_\pm$, here we can find sliding solutions for any $|y|\le1$ after some time $x$, as shown in the inset of \cref{fig:linslide}. The sign of $$\sfrac{\partial\dot z}{\partial\lambda}=-\hf\pi x\cos[\pi x(1+\hf\lambda)]$$
indicates that different solutions for $\lambda$ give either attracting or repelling sliding solutions,
\begin{subequations}\label{nonstab}
\begin{align}
{\rm attracting\;for}\quad&\lambda\in\bb{\sfrac{3}x-2,\sfrac{5}x-2}+\sfrac{4m}x\;,\quad m\in\mathbb Z\;,\\
{\rm repelling\;for}\quad&\lambda\in\bb{\sfrac{1}x-2,\sfrac{3}x-2}+\sfrac{4m}x\;,\quad m\in\mathbb Z\;,
\end{align}
\end{subequations}
on $|y|<1$. It is not possible to make sense of these different solutions here in the limit $\eps=0$, instead we must consider $\eps\ge0$ as we did in \cref{sec:slowfast}, but we can see that these regions are at least consistent with the attracting and repelling branches of the invariant manifold $\op M_0$ in \cref{sec:slowfast}. The multiple overlapping branches of sliding are typical when there is nonlinear dependence on the discontinuous quantity $\lambda$ as in \cref{non}, and such analysis was set out in general in \cite{j18book}. 


Let us now turn to the dynamics that lies outside $|z|\ge\eps$ and crosses through the discontinuity transversally, which we have not touched upon in the previous sections at all, taking again the limiting case $\eps=0$.

\subsection{Crossing dynamics}\label{sec:crossing}

For $z\neq0$, the solution to \cref{sys} from an initial point $y(0)=y_0$, $z(0)=z_0$, is
\begin{align}\label{crossol}
y(t)&=\sfrac1{\beta\mu}\Big(e^{-\hf at}S(t)+\mu R(t)\Big)\;,
\end{align}
with $x(t)=x_0+t$ and $z(t)=y'(t)$, where
\begin{align*}
S(t)&=\mu Q\cos(\hf\mu t)+P\sin(\hf\mu t)\;,\nonumber\\
R(t)&=a\pi\omega_\pm\cos(\pi\omega_\pm t)+\gamma\sin(\pi\omega_\pm t)\;,\nonumber\\
P&=aQ+2(\beta z_0-\gamma\pi\omega_\pm)\;,\nonumber\\
Q&=\beta y_0-a\pi\omega_\pm\;,\qquad
\beta=\gamma^2+a^2\pi^2\omega_\pm^2\;,\nonumber\\
\gamma&=\pi^2\omega_\pm^2-1\;,\qquad \mu=\sqrt{4-a^2}\;,\nonumber
\end{align*}
and with the appropriate $\pm$ signs being taken for $\sign(z)=\pm1$. 
If it is possible to concatenate a solution arriving from one side of $z=0$ with one departing from the other, preserving the direction of time, we obtain a solution that {\it crosses} the switching threshold $\surf$. This is the usual understanding according to the theory of Filippov systems \cite{f88}.

Simulations reveal that the system has a number of invariant objects formed from such crossing dynamics. Some straightforward simulations reveal two key features in the crossing dynamics, namely periodic orbits, which for $a>0$ are attractors, as shown in \cref{fig:po}(i), and which for $a=0$ are surrounded by invariant tori, one example of which is shown in \cref{fig:po}(ii). The periodic orbits have period $\Delta t=8$, and for each orbits there is another, identical in $(y,z)$ but shifted by $\Delta t=4$, as a consequence of the fact that the system with $\omega_+=3/2$ and $\omega=1/2$ is 4 periodic.
\begin{figure}[h!]\centering
\includegraphics[width=0.98\textwidth]{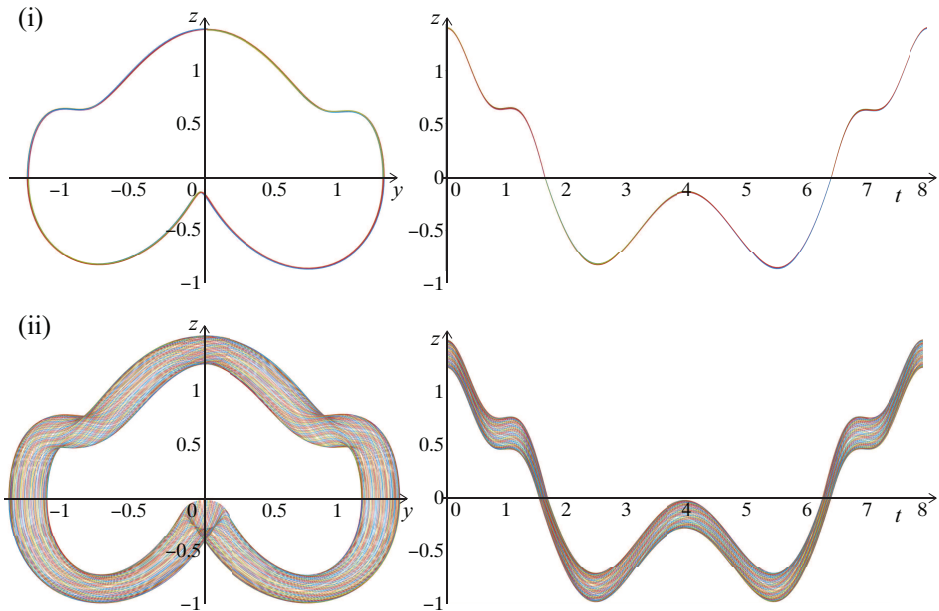}
\vspace{-0.3cm}\caption{\small\sf Examples of invariant objects in the crossing dynamics, simulated from an initial condition $(x_0,y_0,z_0)=(0,0.02,1.3)$, shown in $(y,z)$ and $(x,z)$ projections. (i) For $a=0.01$ we have an attracting periodic orbit. (ii) For $a=0$ we have an orbit on an invariant torus, a continuum of which surround a periodic orbit similar to that in (i). For every periodic orbit there exists a second orbit, identical in $(y,z)$ but shifted in $x$ by $\Delta x=4$. }\label{fig:po}\end{figure}

One can extend this analysis to the singular perturbation problem for small $\eps>0$, and will find a small perturbation of the periodic orbits described above. The proof of this is a direct extension of the results in \cite{j20malt}, so we will not repeat them here. 
In short, when the linear system given by \cref{sys} with \cref{lin} is regularized, the crossing periodic orbits persist. In contrast, when $\eps>0$ in the nonlinear system given by \cref{sys} with \cref{non}, no exactly periodic orbits can persist, because (as we will show below) the system in $|z|<\eps$ changes with $x$ in a non-periodic fashion. Instead, extending arguments from \cite{j20malt}, it is clear that
an orbit exists that, at least for a time of order $1/\eps$, lies within an $\eps$-neighbourhood of a curve that corresponds to each of the periodic orbits found above, but is not an exact solution of the $\eps>0$ system.
Moreover, for $0\ll\eps\ll a$ there will exist an attractor within an $\eps$-neighbourhood of the crossing periodic orbits above. So we have the novel situation that solutions of the nonlinear system tend to the periodic orbits described above as time increases, but those asymptotic orbits are not actual solutions of the system; see \cite{j20malt} for a more complete description and proof.

Thus the linear and nonlinear systems differ in their crossing dynamics by only a small perturbation, in contrast to the sliding dynamics. 
There remains much to be studied here, but the precise nature of the crossing periodic orbits like those in \cref{fig:po} is beyond our present interest, firstly because they can be studied using standard theory, and secondly because they depend on the exact values of $\omega_\pm$ that we set in \cref{omega}. In particular there may exist periodic orbits that have both segments of sliding and crossing, which we have not considered here at all.

The brief description above at least shows that the system exhibits multi-stability, and as such, any differences in behaviour between the linear and nonlinear (or other) formulations, as indeed occur in the sliding dynamics, could have substantial consequences for the global behaviour of the system.

As one further remark for future interest, for $a=0$ the system is piecewise-Hamiltonian, and it is possible to find quantities that are conserved in either of the subsystems in $z>0$ or $z<0$, but which fail to be conserved when they intersect $z=0$. Nevertheless the existence of a continuum of invariant tori, such as that in \cref{fig:po}(ii), suggests that, local to the periodic orbits, an integrable Poincare map can be derived from \cref{crossol}. It is beyond our interest here to explore this further, certainly due to the presence of sliding in the region $\cc{z=0,\;|y|\le1}\subset\surf$, the full system is not Hamiltonian.

\bigskip
\section{Closing Remarks}\label{sec:conc}

The nonlinear or linear expressions in \cref{nonlin} are motivated by seeking a closed functional expression of the switch, rather than being derived from any physical laws, and so perhaps they do not accurately model any real switching mechanism, whether electronic, mechanical, or biological. Work to close this gap between such purely mathematical formulations and physical modeling is likely to continue, but as switching in physical and biological systems typically involves complex multi-scale and often uncertain processes, this problem is not trivial; indeed that is the reason why simplifications like \cref{nonlin} are widely used.

The point of the present work is merely to show how severe the effect of different formulations is, despite being consistent in the (of course singular) limit $\eps\rightarrow0$.
In the case of the oscillator \cref{sys0}, when the forcing is switched by abruptly ramping the frequency $\omega$ between $\omega_\pm$, the expression \cref{non} seems the more natural physical representation, but the model obtained is then not the more commonly adopted Filippov system, which instead takes the form of \cref{lin}. The crucial difference is in the dependence of the function $f$ on $\lambda$ in \cref{nonlin}, not in the shape of the switching function $\lambda$ in \cref{sign}. 

The values of $\omega_\pm$ used here are just for illustration, as are other details of the model. The importance of these results, and the preparatory analysis in \cite{j20malt}, lie not in the periodicity or precise forms of any particular attractors, or in characterizing any particular model, but rather in showing that the long term behaviour of a discontinuous system depends critically on the manner of its dependency on the discontinuity.



The nonlinear system (\cref{sys} with \cref{non}) and linear system (\cref{sys} with \cref{lin}) share certain properties, most notably the existence of crossing periodic orbits. They differ in the key fact that, while orbits in the linear system can only slide for short intervals of time (between the boundaries $\op T_\pm$ in \cref{fig:linslide}), in the nonlinear system we have seen from \cref{sec:bigsmall} that orbits can enter into sliding and become trapped there for all future time.

In their sliding behaviour, in fact, the two systems are vastly different. The linear system is $t=4$ periodic, and for $\eps=0$ becomes a Filippov system analyzable using standard methods, with its most novel feature being that it contains two-fold singularities at every $t=\hf+n$, $n\in\mathbb Z$, and so its perturbation to $\eps>0$ may also be interesting and non-trivial. The nonlinear system is not periodic in the switching layer, and as such, different forms of behaviour --- like the large and small cycles --- dominate at different times. These cycles are a new example of nonlinear sliding oscillations referred to as {\it jitter} in \cite{j18book,j18model}. There is clearly much more rich behaviour to be found in the nonlinear system also.

As an indication of the behaviour that might be found under closer inspection of the nonlinear system, if we fix $x=x_0$ and set $a=0$ in \cref{slow}, we obtain the undamped planar autonomous system
\begin{align}\label{auto}
\dot y&=\eps u\;,\nonumber\\
\eps\dot u&=-y-\sin\bb{\pi x_0(1+\hf u)}\;.
\end{align}
In fact it can be proven that, for order one times, the autonomous system \cref{auto} is a good approximation of the full system \cref{slow}, though the details are beyond our scope here. Let us just briefly describe the insight given by considering this autonomous approximation.

The system \cref{auto} has a unique equilibrium at $(y,u)=\bb{-\sin(\pi x_0),0}$, which is stable for $x_0\in(\sfrac32\pi,\sfrac52\pi)+2n$, and unstable for $x_0\in(\sfrac12\pi,\sfrac32\pi)+2n$, for any $n\in\mathbb N$.
So the change of stability occurs at 
\begin{align}\label{ct}
x_0=\hf+n\;,\qquad n\in\mathbb N\;,
\end{align}
at which the system becomes symmetric about $u=0$, since then $\eps\dot u=-y-\sin\sq{\pi x_0(1+\hf u)}=\cos\sq{\hf\pi x_0u}$, and as a result at times \cref{ct} the whole flow consists of a continuum of periodic orbits.
Passing through \cref{ct} the system undergoes an unusual form of synchronized canard explosion, an example of which is shown in \crefs{fig:canarde}{fig:canard}.
\begin{figure}[h!]\centering
\includegraphics[width=0.98\textwidth]{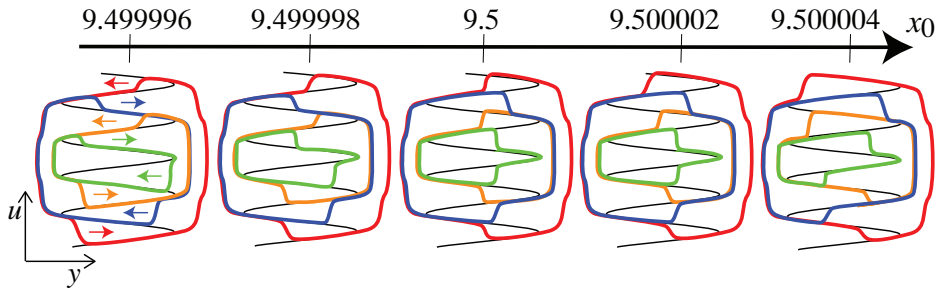}
\vspace{-0.3cm}\caption{\small\sf Galerie au canard: samples of the synchronized canard explosion through $x_0=9.5$, plotted with $\eps=1$, and $x_0$ varying between $9.5\pm0.000004$. The first and last images are also shown more clearly in \cref{fig:canard}. At $x_0=9.5$ itself (central image) 
and at any $x_0=\hf+n$ the entire flow consists of periodic orbits. The arrows in the leftmost image indicate the direction in which the branches are moving through the canard explosion as $x_0$ increases. }\label{fig:canarde}\end{figure}

Following the theme of the {\it chasse au canard} from the original work on canards \cite{b81}, an appropriate name for this phenomenon might be a {\it galerie au canard}. For any $x_0\neq\hf+n$ there exist a finite number, approximately $x_0/2$, of limit cycles formed by {\it large cycles} of the kind from \cref{sec:bigcycles}. Approximately half of these are [un]stable, and consist of two slow segments on [un]stable branches of $\op M_0$, connected by two fast staircases.
These limit cycles therefore form an (approximately) concentric family of relaxation oscillations. Just as the equilibrium changes stability at each $x_0=\hf+n$, each of these cycles also changes stability, and in a small neighbourhood of $x_0=\hf+n$ each cycle undergoes a canard explosion, containing segments that extend along both the stable and unstable branches of $\op M_0$. \Cref{fig:canard} shows an example as $x_0$ increases through $9.5$,
\begin{figure}[h!]\centering
\includegraphics[width=0.8\textwidth]{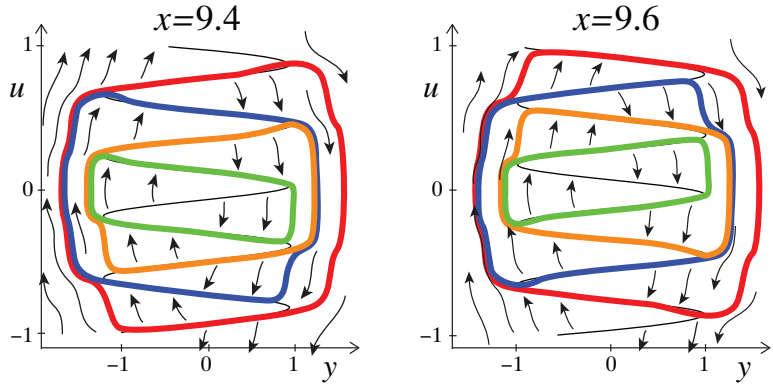}
\vspace{-0.3cm}\caption{\small\sf Images before and after the canard explosion at $x_0=9.5$, magnified to show the direction of the flow. Here we can see the stability and instability of adjacent periodic orbits, as how each orbit `walks' upward as $x_0$ increases by stepping either its left (red and orange unstable orbits here) or right (blue and green stable orbits here) side from one fold to the next.}\label{fig:canard}\end{figure}

We leave these and other rich behaviours to future interest.
The system \cref{sys} was proposed in \cite{j15hidden} to show how certain nonsmooth systems could be surprisingly challenging, analytically and numerically, to the extent that determining the `correct' way to express the system at the discontinuity became insurmountably difficult. In fact we see that simulating the system \cref{sys} numerically is not only difficult, for the nonlinear system it becomes literally impossible as $t$ increases, because timesteps would have to remain smaller than $\eps/t$ to capture the slow-fast character as the slow manifolds accumulate with density as $1/t$.

Because of this $1/t$ ageing, the main features we have uncovered here, as asymptotic results for $t\rightarrow\infty$, are beyond the scope of any numerical methods we are aware of. In fact, as basic numerical simulations confirm, the system becomes incredibly sensitive even for moderate values of $t$ (e.g. around $t\sim10$ for default numerical differential equation solvers in software such as Matlab or Mathematica). At all times, careful inspection is essential to verify that results are mathematically consistent with the inherent geometry, but at some time $t$ (not necessarily very large depending on numerical method, but always finite) numerical methods will fail to capture the solutions of \cref{sys} with the switching rule \cref{non} even approximately.

\bigskip
\noindent{\bf Acknowledgements}. The authors thank DD Novaes for helpful conversations regarding the discontinuous system.
PM was partially supported by the Spanish MINECO-FEDER Grant PID2021-123968NB-I00 and the Catalan grant 2017SGR1049.
This work is also supported by the Spanish State Research Agency, through the Severo Ochoa and Mar\'ia de Maeztu Program for Centers and Units of Excellence in R\&D (CEX2020-001084-M).
CB was partly supported by the Spanish MINECO-FEDER Grant PGC2018-098676-B-100 (AEI/FEDER/UE) and Catalan grant 2017SGR1049.
JMO was partially supported by the Spanish \emph{Agencia Estatal de Investigaci\'on} Project DPI2017-85404-P.

{\small
\bibliography{../../grazcat}
\bibliographystyle{plain}
}


\section*{Appendix A. Proofs of \cref{thm:Me} and \cref{thm:Me_arcs}}\label{sec:proofs}

Here we prove the two lemmas from \cref{sec:slowapprox}.

\begin{proof}[Proof of \cref{thm:Me}]
Local to the critical manifold $\op M_0$, on a region where it is normally hyperbolic, that is, away from the turning points \cref{fold},
there exist slow manifolds $\op M_\eps$ that are an $\sfrac\eps{x}$ perturbation of $\op M_0$.
Assume these can be expressed as \cref{Me} where
\begin{align}
y=Y(x,v;\eps):=Y_0(x,v)+\sfrac{\eps}{x}Y_1(x,v)+\sfrac{\eps^2}{x^2}Y_2(x,v)+...
\end{align}
As this is invariant it satisfies
\begin{align}
0&=\sfrac{d\;}{dt}\bb{Y(x,v;\eps)-y}=(\dot x,\dot y,\dot v)\cdot\nabla \bb{Y(x,v;\eps)-y}\nonumber\\
&=\bb{1,\;\sfrac{\eps}{x}v,\;(\sfrac1x-a)v-\sfrac{x}{\eps}y-\sfrac{x}{\eps}\sin\sq{\pi (x+\hf v)}}\cdot\nabla(Y(x,v;\eps)-y)\nonumber\\
%
&=-\sfrac{x}{\eps}(Y_0+\sin\sq{\pi (x+\hf v)})Y_{0,v}
\nonumber\\&\qquad
+Y_{0,x}+((\sfrac1x-a)v-Y_1)Y_{0,v}-(Y_0+\sin\sq{\pi (x+\hf v)})Y_{1,v}
\nonumber\\&\qquad
+\sfrac{\eps}{x}
\left\{Y_{1,x}-v  -\sfrac1xY_1+((\sfrac1x-a)v-Y_1)Y_{1,v}
	\right.\nonumber\\&\qquad\qquad\;\;\left.
	-(Y_0+\sin\sq{\pi (x+\hf v)})Y_{2,v}-Y_2Y_{0,v}\right\}
+...\nonumber\\
\Rightarrow\qquad
Y_0&=-\sin\sq{\pi (x+\hf v)}\;,\nonumber\\
Y_1&=\frac{Y_{0,x}}{Y_{0,v}}+(\sfrac1x-a)v=2+(\sfrac1x-a)v\;,\nonumber\\
Y_2&=\frac{Y_{1,x}-Y_1Y_{1,v}-\sfrac1xY_1+(\sfrac1x-a)v-v}{Y_{0,v}}\;,
\end{align}
thus the slow manifolds are approximated by $y=Y(x,v;\eps)$ as given by \cref{Ye}.
\end{proof}

%
\begin{proof}[Proof of \cref{thm:Me_arcs}]
Let
\begin{align}
h=Y(x,v;\eps)-y\;,
\end{align}
then the dynamics on the invariant manifold $\op M_\eps$ given by \cref{Me} satisfies
\begin{align}
0&=\sfrac{d\;}{dt}(Y(x,v;\eps)-y)\nonumber\\
&=-\pi(1+\hf\dot v)\cos\sq{\pi (x+\hf v)}\nonumber\\&\qquad\qquad
-\sfrac\eps{x}\cc{v+a\dot v+\sfrac2{x}+\sfrac1{x}(\sfrac2x-a)v}+\ord{\sfrac{\eps^2}{x^2}}\nonumber\\
\Rightarrow\qquad
\dot v&=-\sfrac{\pi\cos\sq{\pi (x+\hf v)}+\sfrac\eps{x}\cc{v(1+\sfrac2{x^2}-\sfrac1xa)+\sfrac2x}}
{\sfrac\pi2\cos\sq{\pi (x+\hf v)}+\sfrac\eps{x}a}+\ord{\sfrac{\eps^2}{x^2}}\nonumber\\
%
%
%
&=-2-2\sfrac{\eps}{\pi x}\frac{v(1+\sfrac2{x^2}-\sfrac1xa)+\sfrac2x+2a}{\cos\sq{\pi (x+\hf v)}}
+\ord{\sfrac{\eps^2}{x^2}}
\;.
\end{align}
Thus we have the slow dynamics \cref{M0proj}, and integrating with respect to time from an initial point $(x_0,y_0,v_0)$ gives \cref{arcapprox} directly.

\end{proof}

\end{document}